\documentclass[12pt]{amsart}
\addtolength{\textwidth}{1in}
\addtolength{\evensidemargin}{-.5in}
\addtolength{\oddsidemargin}{-.5in}

\usepackage{amssymb}
\usepackage{amscd}
\usepackage{amsthm}
\usepackage{eepic}
\usepackage{epic}

\usepackage{hyperref}

\swapnumbers

\newtheorem{Lemma}{Lemma}
\newtheorem{Theorem}[Lemma]{Theorem}

\newtheorem{Corollary}[Lemma]{Corollary}
\newtheorem {Claim}[Lemma]{Claim}
\theoremstyle{remark} 
\newtheorem{Remark}[Lemma]{Remark}

\theoremstyle{definition}

\newtheorem{Example}[Lemma]{Example}
\newtheorem{noTitle}[Lemma]{}

\numberwithin{Lemma}{section}
%\numberwithin{equation}{section}

\def \C {{\mathbb C}}
\def \R {{\mathbb R}}
\def \T {{\mathbb T}}
\def \Z {{\mathbb Z}}
\def \N {{\mathbb N}}

\def \J {{\mathcal{J}}}

\def \M {{\mathcal{M}}}
\def \W {{\mathcal{W}}}

\def \calN {{\mathcal N}}

\def    \i {{\mathfrak{i}}}

\def    \t {{\mathfrak t}}

\def \CP {{\mathbb C}{\mathbb P}}
\def \tCP {{\mathbb C}{\mathbb P}{^2}}

\def    \oCP {{\mathbb C}{\mathbb P}{^1}}
\def \FS {{\text{FS}}}

\def \tDelta {{\tilde{\Delta}}}

\def \Inv {^{-1}}
\def \ssminus {\smallsetminus}
\def \eps {\epsilon}
\def \sss {\scriptstyle}
\def \Cinf {C^\infty}

\def \half {\frac{1}{2}}

\DeclareMathOperator \Diff {Diff}

\DeclareMathOperator \GL {GL}
\DeclareMathOperator \PSL {PSL}
\DeclareMathOperator \id {Id}
\DeclareMathOperator \Lie {Lie}
\DeclareMathOperator \AGL {AGL}
\DeclareMathOperator \image {image}

\DeclareMathOperator \perimeter {perimeter}
\DeclareMathOperator \area {area}

\DeclareMathOperator \Hirz {Hirz}

\def\eoe{\unskip\ \hglue0mm\hfill$\between$\smallskip\goodbreak}

%%%%%%%%%%%%%%%%%%%%%%%%%%%%%%%%%%%%%%%%%%%%%%%%%%%%%%%%%%%%%%%%%%%%%
%   Work mode:
%%%%%%%%%%%%%%%%%%%%%%%%%%%%%%%%%%%%%%%%%%%%%%%%%%%%%%%%%%%%%%%%%%%%%
%\newcommand    {\comment}[1]   {{\marginpar{*}\ \scriptsize{#1}\ }}
%\newcommand{\mute}[2] {{\scriptsize \ #1\ }\marginpar{\scriptsize muting}}
%\newcommand{\printname}[1]
%       {\smash{\makebox[0pt]{\hspace{-1.0in}\raisebox{8pt}{\tiny #1}}}}
%
%%%%%%%%%%%%%%%%%%%%%%%%%%%%%%%%%%%%%%%%%%%%%%%%%%%%%%%%%%%%%%%%%%%%%
%   Clean mode:
%%%%%%%%%%%%%%%%%%%%%%%%%%%%%%%%%%%%%%%%%%%%%%%%%%%%%%%%%%%%%%%%%%%%%
\newcommand     {\comment}[1]   {}
\newcommand{\mute}[2] {}
\newcommand     {\printname}[1] {}
\newcommand{\labell}[1] {\label{#1}\printname{#1}}
%
%%%%%%%%%%%%%%%%%%%%%%%%%%%%%%%%%%%%%%%%%%%%%%%%%%%%%%%%%%%%%%%%%%%%%

\begin{document}

\title{Torus actions on small blow ups of $\tCP$}
\author{Liat Kessler}
\address{M.I.T. Department of Mathematics, Cambridge MA 02139, U.S.A}
\email{kessler@math.mit.edu}
%\address{Courant Institute of Mathematical Sciences, New York University,  
%251 Mercer St. New York, NY 10012, U.S.A.}
%\email{kessler@cims.nyu.edu}

\thanks{\emph{2000 Mathematics Subject Classification}.
Primary 53D20, 53D35, 53D45.  Secondary 57S15.}

\begin{abstract}
A manifold obtained by $k$ simultaneous symplectic blow-ups of $\CP^2$
of equal sizes $\epsilon$ 
(where the size of $\CP^1\subset\CP^2$ is one)
admits an effective two dimensional torus action if $k \leq 3$.
We show that it does not admit such an action if $k \geq 4$ and $\epsilon \leq \frac{1}{3k 2^{2k}}$.
%The proof combines ``soft" equivariant techniques with ``hard" 
%holomorphic techniques. 
For the proof, we correspond between the 
geometry of a symplectic toric four-manifold 
and the combinatorics of its moment map image. We also use techniques from the theory of J-holomorphic curves. 
\end{abstract}

\maketitle

\section{Introduction}
Let a torus $\T^{\ell} = (S^1)^{\ell}$ act effectively on 
a symplectic $2n$-dimensional manifold $(M,\omega)$. 
The action is called \emph{Hamiltonian}
if there exists a \emph{moment map}, that is, a map
$$ \Phi \colon M \rightarrow  {(\t^{\ell})}^* = {\R}^{\ell} $$  
that satisfies 
$$d \Phi_i = - \iota (\xi_i) \omega$$ 
for $i = 1,\ldots,{\ell}$, where $\xi_1,\ldots,\xi_{\ell}$ are the vector fields that 
generate the $\T^{\ell}$-action.
Unless said otherwise, we assume that $M$ is compact and connected.  
The image of the moment map, 
$$\Delta:= \Phi(M),$$ 
is then a convex polytope \cite{g-s:convexity}.

If $\dim {\T^{\ell}} = \half \dim M $, the triple $(M,\omega,\Phi)$ 
is a \emph{symplectic toric manifold}, and the torus action is called \emph{toric}.
The moment map image is  a \emph{Delzant polytope}; this means that   
the edges emanating from each vertex are generated 
by vectors $v_1,\ldots,v_n$ that span the lattice $\Z^n$.
By the Delzant theorem, $(M,\omega,\Phi)$ is determined by $\Delta$ up to an equivariant 
symplectomorphism.
Conversely, given a Delzant polytope $\Delta$ in $\R^n$, Delzant constructs a 
symplectic toric manifold $(M_{\Delta},\omega_{\Delta},\Phi_{\Delta})$  
whose moment map image is $\Delta$ \cite{delzant}.

As a result of Delzant's theorem and a combinatorial analysis of Delzant polygons, any 
symplectic toric four-manifold is obtained from either a standard $\tCP$ or a Hirzebruch surface, 
by a sequence of equivariant symplectic blow ups.  (See Lemma \ref{fulton}.)
However, it may be difficult to determine whether a given symplectic four-dimensional manifold 
is symplectomorphic to such a manifold.

For instance, let $(M_k,\omega_\epsilon)$ be a symplectic manifold
that is obtained from $(\tCP,\omega_{\FS})$ 
by $k$ simultaneous symplectic blow ups of equal sizes $\epsilon >0$.
(Our normalization convention for the Fubini-Study form $\omega_{\FS}$ is that the size of $\CP^1 \subset \CP^n$, $\frac{1}{2\pi} \int_{\CP^1} \omega_{\FS}$, is equal to one.) 
If $k \geq 4$, this manifold does not admit a toric action that is \emph{consistent with the blow ups},
 i.e., the blow ups cannot be performed equivariantly. 
(See Lemma \ref{T equivariant cor}.)  
Does it admit any other toric action?

In \cite{kk} we show that the answer is 'no' when $\epsilon$ is $1 \over n$ for an integer $n$. 
In this paper we show that the answer is 'no' for $\epsilon \leq \frac{1}{3k 2^{2k}}$, as a corollary of the following theorem.
\begin{Theorem}\label{main1}
If $(M_k,\omega_\epsilon)$ is symplectomorphic to $(M_\Delta,\omega_\Delta)$, for a Delzant polygon $\Delta$,
and 
\begin{equation} \label{epsssmall1}
 \epsilon \leq \frac{1}{3k 2^{2k}}, 
\end{equation}  
then $(M_\Delta,\omega_{\Delta},\Phi_{\Delta})$ can be obtained from $(\tCP,\omega_{\FS})$ 
by $k$ equivariant symplectic blow ups of equal size $\epsilon$.
\end{Theorem}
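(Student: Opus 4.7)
The plan is to use Lemma \ref{fulton} to reduce to combinatorics and then pass back to $(M_k,\omega_\epsilon)$ to constrain the combinatorial data using small-area estimates on classes in $H_2(M_k;\Z)$. By Lemma \ref{fulton}, $(M_\Delta,\omega_\Delta,\Phi_\Delta)$ is obtained from either a $(\tCP,s\cdot\omega_{\FS})$ or a symplectic Hirzebruch surface by a sequence of equivariant symplectic blow ups of sizes $\delta_1,\ldots,\delta_m$. An Euler characteristic count using $\chi(M_k)=k+3$ forces $m=k$ in the $\tCP$-case and $m=k-1$ in the Hirzebruch case. The task becomes: (a) show the $\tCP$-case holds with $s=1$, and (b) show each $\delta_j=\epsilon$.

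The crucial input is a small-area classification of \emph{exceptional classes} in $H_2(M_k;\Z)$: classes $E$ with $E\cdot E=-1$ and $c_1(E)=1$. Writing $E=aL-\sum b_iE_i$ in the standard basis, where $L$ is the line class and $E_i$ is the $i$-th exceptional divisor of the $k$ blow ups defining $M_k$, the conditions read $a^2-\sum b_i^2=-1$ and $3a-\sum b_i=1$. For $a=0$ we must have $E=E_i$ for some $i$, of $\omega_\epsilon$-area exactly $\epsilon$. For $a\geq 1$, Cauchy--Schwarz gives $|\sum b_i|\leq\sqrt{k(a^2+1)}$, so $\omega_\epsilon(E)\geq a-\epsilon\sqrt{k(a^2+1)}$; the hypothesis $\epsilon\leq\frac{1}{3k\cdot 2^{2k}}$ places this well away from $\epsilon$, in fact close to $a\geq 1$. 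A parallel analysis handles classes $H$ with $H^2=1,c_1(H)=3$, whose $\omega_\epsilon$-area is either $1$ (if $H=L$) or bounded below by something close to $2$.

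Given this, the $\tCP$-case is quickly resolved. Each equivariant blow up of size $\delta_j$ introduces a new edge of $\Delta$ carrying an embedded $\T^2$-invariant symplectic sphere whose class in $H_2(M_k;\Z)$ is exceptional and has $\omega_\epsilon$-area $\delta_j$. Since the total symplectic volume $s^2-\sum\delta_j^2=1-k\epsilon^2$ and the triangle base is shared across all cuts, each $\delta_j$ is forced to be small; then the classification forces the corresponding class to be some $E_i$ and hence $\delta_j=\epsilon$. Similarly, the base size $s$ corresponds to a hyperplane class $H$ of small area, hence $H=L$ and $s=1$.

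The main obstacle, and the hardest step, is ruling out the Hirzebruch case. The Hirzebruch trapezoid has two parallel edges carrying classes $S_0,S_\infty\in H_2(M_k;\Z)$ with $S_0^2=-n$, $S_\infty^2=n$, and two further edges carrying a fiber class $F$ with $F^2=0$, $c_1(F)=2$. I would express $S_0,S_\infty,F$ in the basis $L,E_1,\ldots,E_k$; the linear relation $[S_\infty]-[S_0]=n[F]$ together with the self-intersection constraints and a Cauchy--Schwarz bound analogous to the exceptional-class argument should leave no room for a valid Hirzebruch presentation under $\epsilon\leq\frac{1}{3k\cdot 2^{2k}}$ except when $n\in\{0,1\}$, in which case a single toric corner move (blow up of a vertex followed by blow down of a $-1$-edge) converts the Hirzebruch presentation into the $\tCP$-presentation already analyzed, completing the proof.
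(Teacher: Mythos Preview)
Your approach is genuinely different from the paper's, but it has a real gap at the step ``each $\delta_j$ is forced to be small.'' Write the toric exceptional classes as $E'_j=a_jL-\sum b^{(j)}_iE_i$; since $\sum_i b^{(j)}_i=3a_j-1$ exactly, one has $\omega_\epsilon(E'_j)=a_j(1-3\epsilon)+\epsilon$, so $\delta_j\in\{\epsilon,\,1-2\epsilon,\,2-5\epsilon,\dots\}$, and similarly $s\in\{1,\,2-3\epsilon,\dots\}$. The volume and perimeter constraints become $\sum a_j=3(a'-1)$ and $\sum a_j^2=a'^2-1$, which for $a'=2$ admit the solution $a_j=1$ for three indices and $a_j=0$ for the rest (any $k\geq3$), i.e.\ $s=2-3\epsilon$ and three $\delta_j=1-2\epsilon$. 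So your numerics do \emph{not} force $s=1$. What is actually happening is that this alternate presentation is related to the $s=1$ presentation by a Cremona move, and the corresponding polygons are $\AGL(2,\Z)$-congruent; but establishing that in general is exactly the hard content you have not supplied. The Hirzebruch case is worse: for $k\geq1$ Lemma~\ref{fulton} only guarantees a Hirzebruch presentation, and your proposal to force $n\in\{0,1\}$ and then ``convert'' is a sketch, not an argument (and the $n=0$ base $S^2\times S^2$ is not a chopped triangle at all).

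The paper's proof avoids this entirely by working with the \emph{fixed} classes $E_1,\dots,E_k$ rather than the toric classes $E'_j$. Using Gromov compactness (Lemma~\ref{jexis}) it produces a $J_T$-cusp curve in each $E_i$, then pushes it to $\partial\Delta$ via the gradient-flow Lemma~\ref{push11} and Claim~\ref{push2} to obtain a \emph{connected} $\Delta$-representative of each $E_i$. A combinatorial count (Claims~\ref{pos}, \ref{poscon}) together with the Hirzebruch structure and the area/perimeter identities then shows that, under $\epsilon\leq\frac{1}{3k2^{2k}}$, at least one $E_i$ is the preimage of a single edge; equivariant blow-down along it yields $(M_{k-1},\omega_\epsilon)$ with a toric action (using the blow-down uniqueness in the appendix), and one recurses. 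The essential difference is that the paper locates the \emph{given} exceptional classes on $\partial\Delta$ by $J$-holomorphic methods, whereas you try to recognise the \emph{toric} exceptional classes algebraically---and that recognition is obstructed by Cremona ambiguity.
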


The theorem becomes false if we do not restrict $\epsilon$; 
for $\epsilon > \half$, there is a toric action on 
$(M_1,\omega_\epsilon)$ that is not consistent with the $\epsilon$-blow up, see Remark \ref{example}. 
%However, condition (\ref{epsssmall1}) is not sharp, see \cite{}. %%%%refer to 1/3
Theorem \ref{main1} can be strengthen to the case $\epsilon \leq \frac{1}{3}$, see \cite[Corollary 3.14]{martin} and \cite[Theorem 3]{thesis}. However, here we use different methods in the proof; in particular, our arguments illustrate explicitly %the relations between the geometry of the manifold and the combinatorics of the moment map, and 
the behavior of $J_T$-holomorphic curves and their moment map-images. ($J_T$ denotes a $\T^2$-invariant complex structure on the manifold that is compatible with the symplectic form.) These novel arguments might be useful in other studies of torus actions on symplectic manifolds. We note that the proof in this paper is prior (and motivating) to the proof of the stronger claim. 

In proving Theorem \ref{main1}, we apply Gromov's compactness theorem for J-holomorphic curves to show the existence of 
$J_T$-curves in the homology classes of 
exceptional divisors %of size \epsilon %%%%
obtained by the symplectic $\epsilon$-blow ups. 
%($J_T$ denotes a $\T^2$-invariant complex structure on $M$ that is compatible with $\omega$.)
In the case presented here, (as opposed to the case $\epsilon = {1 \over n}$ for an integer $n$), a-priori
these might be non-smooth cusp curves. We claim that in one of these homology classes there 
is a smooth $J_T$-holomorphic sphere. To prove this claim, we represent $J_T$-holomorphic spheres and cusp curves on the boundary of the moment map image, and 
%we use a correspondence between the 
%geometry of a symplectic toric four-manifold 
%and the combinatorics of its moment map image, and 
reduce 
the claim to a combinatorial claim on the moment map polygon.  
 A key ingredient is Lemma \ref{push11}, saying that a $J_T$-holomorphic sphere whose moment map image avoids a neighbourhood of a vertex in the moment map polygon $\Delta$ can be pushed, by a gradient flow, to a connected union of preimages of a
chain of edges of $\Delta$.

The geometry-combinatorics correspondence is established in 
Section \ref{gecodata} and Section \ref{push}. 
%; it includes new observations and known results. %%%%from kk and finite 
The relevant results
from Gromov's theory of J-holomorphic curves are recalled in Section \ref{sec:holomorphic}.

To complete the proof of Theorem \ref{main1} by recursion, 
we need 
uniqueness of symplectic blow downs: symplectic blow downs along homologous curves
result in symplectomorphic manifolds. This is shown in the appendix.

%\subsection*{Acknowledgment}
%This paper is based on a Ph.D.\ thesis under the attentive instruction of Prof.\ Yael Karshon 
%for the Hebrew University of Jerusalem.

%I am grateful to Yael Karshon and Leonid Polterovich for giving  
%the right answers and asking the right questions, 
%and to Dusa McDuff for helpful discussions.

%%%%%%%%%%%%%%%%%%%%%%%%%%%%%%%%%%%%%%%%%%%%%%%%%%%%%%%%%%%%%%%%%%%%%%%%%%%%%%%%
\section{Reading geometric data from the moment map polygon}\label{gecodata}
%%%%%%%%%%%%%%%%%%%%%%%%%%%%%%%%%%%%%%%%%%%%%%%%%%%%%%%%%%%%%%%%%%%%%%%%%%%%%%%%

\begin{noTitle}
An important model for a Hamiltonian action is $\C^n$ with the 
standard symplectic form, the standard $\T^n$-action given by rotations of the 
coordinates, and the moment map
\begin{equation} \labell{Cn mm}
 (z_1,\ldots,z_n) \mapsto \half (|z_1|^2,\ldots,|z_n|^2).
\end{equation}
The image of this moment map is the positive orthant, 
$$ \R_+^n = \{ (s_1,\ldots,s_n) \ | \ s_j \geq 0 \text{ for all $j$ } \}.$$

A Delzant polytope can be obtained by gluing open subsets of $\R_+^n$ 
by means of elements of $\AGL(n,\Z)$. ($\AGL(n,\Z)$ is the group of affine transformations
of $\R^n$ that have the form $x \mapsto Ax + \alpha$ 
with $A \in \GL(n,\Z)$ and $\alpha \in \R^n$.)
Similarly, a symplectic toric manifold can be obtained by gluing 
open $\T^n$-invariant subsets of $\C^n$ by means of 
equivariant symplectomorphisms and reparametrizations of $\T^n$. 
\end{noTitle}

\begin{noTitle} \labell{rational length}
The \emph{rational length} of an interval $d$ of rational slope in $\R^n$
is the unique number $\ell = |d|$ such that the interval 
is $\AGL(n,\Z)$-congruent to an interval of length $\ell$ 
on a coordinate axis. 
In what follows, intervals are always measured by rational length.
\end{noTitle}

\begin{noTitle} \labell{calJ}
An almost complex structure on a $2n$-dimensional manifold $M$ is an
automorphism of the tangent bundle, $J \colon TM \to TM$,
such that $J^2 = -\id$.  
It is \emph{compatible} with a symplectic form $\omega$ 
if $\langle u,v \rangle = \omega(u,Jv)$ is symmetric and positive definite. 
The \emph{first Chern class} 
of the symplectic manifold $(M,\omega)$ is defined to be the first Chern 
class of the complex vector bundle $(TM,J)$ and is denoted $c_1(TM)$.
This class is independent of the choice of 
compatible almost complex structure $J$ \cite[\S 2.6]{intro}.
\end{noTitle} 

\begin{Lemma} \labell{perimeter and area}
Let $(M,\omega)$ be a compact connected symplectic four-manifold.  
Let $\Phi \colon M \to \R^2$  be a moment map for a toric action,
and let $\Delta$ be its image.
\begin{enumerate}
\item 
The moment map-preimage of a vertex of $\Delta$ 
is a fixed point for the torus action, and the moment map-image of a fixed point 
is a vertex of $\Delta$.
\item \labell{i2}
Let $d$ be an edge of $\Delta$ of rational length $\ell$.  
Then its preimage, $\Phi\Inv(d)$, is a symplectically embedded 2-sphere 
in $M$ of symplectic area 
$$ \int_{\Phi\Inv(d)} \omega = 2 \pi \ell.$$
\item \labell{i3}
The (rational) perimeter of $\Delta$ is
$$ \perimeter(\Delta) = \frac{1}{2\pi} \int_M \omega \wedge c_1(TM) .$$ 
\item \labell{i4}
The area of $\Delta$ is
$$ \frac{1}{(2\pi)^2} \int_M \frac{1}{2!} \omega \wedge \omega .$$
\end{enumerate}
\end{Lemma}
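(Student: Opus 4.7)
The plan is to treat the four parts in turn, using a local equivariant normal form near fixed points, a residual $S^1$-action on $\Phi^{-1}(d)$, and a Duistermaat--Heckman calculation. For (1), the equivariant Darboux theorem gives, near any fixed point $p$, a $\T^2$-equivariant symplectomorphism from a neighbourhood of $p$ to an open subset of $(\C^2,\omega_{\text{std}})$ with the standard torus action and moment map $(z_1,z_2)\mapsto\tfrac12(|z_1|^2,|z_2|^2)+\Phi(p)$. The image of this model is a translated positive quadrant, so $\Phi(p)$ is a vertex of $\Delta$; conversely, at a vertex $d\Phi$ has rank $0$, which forces the generating vector fields to vanish, so the preimage is a fixed point. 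For (2), let $d$ have endpoints $v_0,v_1$ and set $C:=\Phi^{-1}(d)$. Choosing an $\AGL(2,\Z)$-chart at $v_0$ aligning $d$ with the first coordinate axis, the local model shows that $C$ near $\Phi^{-1}(v_0)$ is the symplectic disc $\{(z_1,0)\}$, fixed pointwise by the second $S^1$-factor; doing the same at $v_1$ and using the freeness of the residual $\T^2/S^1$-action in the interior, $C$ is a smoothly embedded symplectic 2-sphere. The residual $S^1$-action on $C$ has moment map obtained by projecting $\Phi$ along $d$, with image an interval whose rational length is $\ell$. The standard formula $\int_C \omega = 2\pi(\mu_{\max}-\mu_{\min})$ for an $S^1$-invariant symplectic 2-sphere then yields $\int_C\omega = 2\pi\ell$.

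For (3), I would sum (2) over all edges after identifying $c_1(TM)$ with the Poincar\'e dual of the toric boundary divisor $\sum_d [C_d]$. Picking a $\T^2$-invariant $\omega$-compatible almost complex structure $J$, each chart from (1) is holomorphic, so in the chart $TM$ splits as the sum of the tangent bundles to the two coordinate axes. This gives the local identity $c_1(T\C^2)=\text{PD}[\{z_1=0\}]+\text{PD}[\{z_2=0\}]$, which glues across the $\AGL(2,\Z)$ transition maps to the global identity $c_1(TM)=\sum_d\text{PD}[C_d]$. Pairing with $[\omega]$ yields
\[
\int_M \omega\wedge c_1(TM) \;=\; \sum_d \int_{C_d}\omega \;=\; 2\pi\sum_d \ell(d) \;=\; 2\pi\,\perimeter(\Delta).
\]

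For (4), the Duistermaat--Heckman theorem identifies $\Phi_\ast(\omega^2/2!)$ with $(2\pi)^2$ times Lebesgue measure on $\Delta$. This is verified directly in the local model: the Liouville form $r_1 r_2\,dr_1\,d\theta_1\,dr_2\,d\theta_2$ on $\C^2$ pushes forward under $s_j=\tfrac12 r_j^2$ to $(2\pi)^2\,ds_1\,ds_2$ on $\R_+^2$, and the equivariant symplectomorphisms of (1) transport this conclusion from the toric charts to all of $M$. Integrating yields $\int_M \omega^2/2 = (2\pi)^2\,\area(\Delta)$. The main obstacle is the Chern-class identity in (3): one must verify that the locally defined divisor class $\text{PD}[\{z_1=0\}]+\text{PD}[\{z_2=0\}]$ really patches along each edge to give $\sum_d \text{PD}[C_d]$ with no correction terms, which reduces to checking that the $\AGL(2,\Z)$ transitions preserve the toric divisor classes. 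An alternative that bypasses this is to compute $\int \omega\wedge c_1(TM)$ directly from the Delzant realization of $M$ as a symplectic quotient, reading off both $[\omega]$ and $c_1(TM)$ from the moment-cone data.
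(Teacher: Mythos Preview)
The paper does not actually prove this lemma; it simply refers to \cite[Lemma~2.10 and Lemma~2.2]{finite}. So there is no in-paper argument to compare against, and your direct arguments for parts (1), (2), and (4) are standard and correct.

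For part~(3), your conclusion $c_1(TM)=\sum_d \mathrm{PD}[C_d]$ is right, but the justification you sketch does not work as written. On a contractible toric chart $U$ identified with an open set in $\C^2$, both $c_1(TM)|_U$ and $\mathrm{PD}[\{z_1=0\}]+\mathrm{PD}[\{z_2=0\}]$ vanish in $H^2(U)$, so your ``local identity'' is the vacuous equation $0=0$ and carries no gluing information; the worry you flag about ``correction terms'' is a symptom of this. The clean global argument is to exhibit a global meromorphic section of the canonical bundle: the $\T^2$-invariant $2$-form $\tfrac{dz_1}{z_1}\wedge\tfrac{dz_2}{z_2}$ is preserved up to sign by the $\GL(2,\Z)$ transition maps between toric charts (an element of $\GL(2,\Z)$ multiplies it by its determinant $\pm 1$), so it defines a global meromorphic section of $K_M$ with no zeros and with simple poles exactly along $\bigcup_d C_d$. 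Hence $K_M\cong\calO(-\sum_d C_d)$ and $c_1(TM)=-c_1(K_M)=\sum_d\mathrm{PD}[C_d]$, after which your pairing with $[\omega]$ goes through. Your alternative of reading $c_1$ off the Delzant symplectic-quotient construction is also a legitimate route and is closer in spirit to what the cited reference does.
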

For proof, see \cite[Lemma 2.10 and Lemma 2.2]{finite}.

\begin{figure}[ht]
\setlength{\unitlength}{0.00083333in}
\begingroup\makeatletter\ifx\SetFigFont\undefined%
\gdef\SetFigFont#1#2#3#4#5{%
  \reset@font\fontsize{#1}{#2pt}%
  \fontfamily{#3}\fontseries{#4}\fontshape{#5}%
  \selectfont}%
\fi\endgroup%
{\renewcommand{\dashlinestretch}{30}
\begin{picture}(3612,639)(0,-10)
%
% Triangle + lambda arrow:
%
\path(300,612)(900,12)(300,12)(300,612)
\dottedline{45}(150,612)(150,12)
\path(120.000,132.000)(150.000,12.000)(180.000,132.000)
\path(180.000,492.000)(150.000,612.000)(120.000,492.000)
\put(0,260){\makebox(0,0)[lb]{\smash{{{\SetFigFont{12}{14.4}{\rmdefault}{\mddefault}{\updefault}$\scriptstyle{\lambda}$}}}}}
%
% Trapezoid, its horizontal and vertical arrows, and the slope:
%
\path(1800,612)(2400,612)(3600,12)(1800,12)(1800,612)
%
% its horizontal arrow:
%
\path(1920.000,342.000)(1800.000,312.000)(1920.000,282.000)
\dottedline{45}(1800,312)(3000,312)
\path(2880.000,282.000)(3000.000,312.000)(2880.000,342.000)
\put(2300,230){\makebox(0,0)[lb]{\smash{{{\SetFigFont{12}{14.4}{\rmdefault}{\mddefault}{\updefault}$\scriptstyle{a}$}}}}}
%
% its vertical arrow and F:
%
\dottedline{45}(1570,612)(1570,12)
\path(1600.000,492.000)(1570.000,612.000)(1560.000,492.000)
\path(1560.000,132.000)(1570.000,12.000)(1600.000,132.000)
\put(1480,260){\makebox(0,0)[lb]{\smash{{{\SetFigFont{12}{14.4}{\rmdefault}{\mddefault}{\updefault}$\scriptstyle{b}$}}}}}
\put(1820,120){\makebox(0,0)[lb]{\smash{{{\SetFigFont{12}{14.4}{\rmdefault}{\mddefault}{\updefault}$\scriptstyle{F}$}}}}}
%
% the slope:
%
\put(2850,450){\makebox(0,0)[lb]{\smash{{{\SetFigFont{12}{14.4}{\rmdefault}{\mddefault}{\updefault}\scriptsize{slope=}$\scriptstyle{-1/k}$}}}}}
%
% N, S, right F:
%
\put(2500,50){\makebox(0,0)[lb]{\smash{{{\SetFigFont{12}{14.4}{\rmdefault}{\mddefault}{\updefault}$\scriptstyle{S}$}}}}}
\put(2025,650){\makebox(0,0)[lb]{\smash{{{\SetFigFont{12}{14.4}{\rmdefault}{\mddefault}{\updefault}$\scriptstyle{N}$}}}}}
\put(3275,200){\makebox(0,0)[lb]{\smash{{{\SetFigFont{12}{14.4}{\rmdefault}{\mddefault}{\updefault}$\scriptstyle{F}$}}}}}
\end{picture}
}
\caption{A Delzant triangle, $\Delta_\lambda$, 
         and a Hirzebruch trapezoid, $\Hirz_{a,b,k}$}
\labell{fig:triangle}
\end{figure}

\begin{Example} \labell{ex:Delzant}
Figure \ref{fig:triangle} shows examples of Delzant polygons 
with three and four edges.  
On the left there is a \emph{Delzant triangle},
$$ \Delta_\lambda = \{ (x_1,x_2) \ | \ 
   x_1 \geq 0 , x_2 \geq 0 , x_1 + x_2 \leq \lambda \}. $$
This is the moment map image of the standard toric action $(a,b) \cdot [z_0 : z_1 : z_2] = [z_0 : a z_1 : b z_2]$ on $\CP^2$, 
with the Fubini-Study symplectic form normalized so that 
the symplectic area of $\CP^1 \subset \CP^2$ is $2 \pi \lambda$.
The rational lengths of all its edges is $\lambda$.

On the right there is a \emph{Hirzebruch trapezoid},
$$ \Hirz_{a,b,k} = \{ (x_1,x_2) \ | \ 
   -\frac{b}{2} \leq x_2 \leq \frac{b}{2} , 0 \leq x_1 \leq a - kx_2 \} .$$
$b$ is the height of the trapezoid, $a$ is its average width, 
and $k$ is a non-negative integer such that the east edge has slope $-1/k$ 
or is vertical if $k=0$.  
We assume that $a \geq b$ and that $a - k \frac{b}{2} >  0$.
This trapezoid is a moment map image of a standard toric action 
on a Hirzebruch surface.
The rational lengths of its west and east edges are $b$;
the rational lengths of its north and south edges are $a \pm kb/2$.
\eoe
\end{Example}

\begin{noTitle} \labell{corner chopping}
Let $\Delta$ be a Delzant polytope in $\R^n$,
let $v$ be a vertex of $\Delta$,
and let $\delta > 0$ be smaller than the rational lengths
of the edges emanating from $v$. The edges of $\Delta$
emanating from $v$ have the form 
$\{ v + s \alpha_j \ | \ 0 \leq s \leq \ell_j \}$
where the vectors $\alpha_1, \ldots \alpha_n$ generate the lattice $\Z^n$
and $\delta < \ell_j$ for all $j$.
The \emph{corner chopping of size $\delta$} of $\Delta$ at $v$
is the polytope $\tDelta$ obtained from $\Delta$ by intersecting
with the half-space
$$ \{ \ v + s_1 \alpha_1 + \ldots + s_n \alpha_n \ \ | \  \ \ 
      s_1 + \ldots + s_n \geq \delta \} . $$
See, e.g., the chopping of the top right corner in Figure \ref{fig:blowup}.
The resulting polytope $\tDelta$ is again a Delzant polytope.
The corner chopping operation commutes with $\AGL(n,\Z)$-congruence:
if $\tDelta$ is obtained from $\Delta$ by a corner chopping
of size $\delta > 0$ at a vertex $v \in \Delta$ then, for any
$g \in \AGL(n,\Z)$, the polytope $g(\tDelta)$ is obtained
from the polytope $g(\Delta)$ by a corner chopping of size $\delta$
at the vertex~$g(v)$.  
\end{noTitle}

\begin{figure}[ht]
\setlength{\unitlength}{0.00083333in}
\begingroup\makeatletter\ifx\SetFigFont\undefined%
\gdef\SetFigFont#1#2#3#4#5{%
  \reset@font\fontsize{#1}{#2pt}%
  \fontfamily{#3}\fontseries{#4}\fontshape{#5}%
  \selectfont}%
\fi\endgroup%
{\renewcommand{\dashlinestretch}{30}
\begin{picture}(5211,1137)(0,-10)
%\thicklines
%\path(44,912)(1844,912)
\thinlines
\path(3044,912)(4544,912)(4844,612)
        (4844,12)(3044,12)(3044,912)
\thicklines
\path(3044,912)(4544,912)(4844,612)
\put(719,987){\makebox(0,0)[lb]{\smash{{{\SetFigFont{12}{14.4}{\rmdefault}{\mddefault}{\updefault}$\sss l_1$}}}}}
%\put(1919,987){\makebox(0,0)[lb]{\smash{{{\SetFigFont{12}{14.4}{\rmdefault}{\mddefault}{\updefault}$v$}}}}}
%\put(1869,937){\makebox(0,0)[lb]{\smash{{{\SetFigFont{12}{14.4}{\rmdefault}{\mddefault}{\updefault}$\sss v$}}}}}
\path(2300,462)(2800,462)
\path(2750,512)(2800,462)(2750,412)
\thinlines
\path(44,912)(1844,912)(1844,12)
        (44,12)(44,912)
\put(1919,312){\makebox(0,0)[lb]{\smash{{{\SetFigFont{12}{14.4}{\rmdefault}{\mddefault}{\updefault}$\sss l_2$}}}}}
\put(4919,237){\makebox(0,0)[lb]{\smash{{{\SetFigFont{12}{14.4}{\rmdefault}{\mddefault}{\updefault}$\sss l_2-\delta$}}}}}
\put(794,387){\makebox(0,0)[lb]{\smash{{{\SetFigFont{12}{14.4}{\rmdefault}{\mddefault}{\updefault}$\sss \Delta$}}}}}
\put(3644,312){\makebox(0,0)[lb]{\smash{{{\SetFigFont{12}{14.4}{\rmdefault}{\mddefault}{\updefault}$\sss \Tilde{\Delta}$}}}}}
\put(3419,987){\makebox(0,0)[lb]{\smash{{{\SetFigFont{12}{14.4}{\rmdefault}{\mddefault}{\updefault}$\sss l_1-\delta$}}}}}
\put(4769,837){\makebox(0,0)[lb]{\smash{{{\SetFigFont{12}{14.4}{\rmdefault}{\mddefault}{\updefault}$\sss \delta$}}}}}
\end{picture}
}
\caption{Corner chopping} 
\labell{fig:blowup}
\end{figure}

\begin{noTitle} \labell{blowup}
Recall that a blow up of \emph{size} $\eps=r^2/2$ of a $2n$-dimensional symplectic manifold $(M,\omega)$ 
is a new symplectic manifold $(\tilde{M},\tilde{\omega})$ 
that is constructed in the following way.
Let $\Omega \subset \C^n$ be an open subset that contains a ball 
about the origin of radius greater than $r$,
 and let 
$i \colon \Omega \to M$ be a symplectomorphism onto an open subset of $M$. (We consider $\C^n$ with the standard symplectic form.)
The standard symplectic blow up of $\Omega$
of size $r^2/2$ is obtained by removing
the open ball $B^{2n}(r)$ of radius $r$ about the origin 
and collapsing its boundary along the Hopf fibration 
$\partial B^{2n}(r) \to \CP^{n-1}$; the resulting space is naturally a smooth symplectic manifold \cite[Section 7.1]{intro}. 
This blow up transports to $M$
through $i$. 
The resulting copy of $(\CP^{n-1},\eps \omega_{\FS})$ in $\tilde{M}$ is called the 
\emph{exceptional divisor}.

If $M$ admits an action of a torus $\T^{\ell}$, and $i \colon \Omega \to M$ is 
$\T^{\ell}$-equivariant, where $\T^{\ell}$ acts on $\Omega$
through some homomorphism $\T^{\ell} \to U(n)$, 
then the torus action
naturally extends to the symplectic blow up of $M$ obtained from $i$, and the blow up
is \emph{equivariant}.
If the action on $M$ is Hamiltonian, its moment map naturally extends
to the blow up;  
in the case $\ell = n$ we call this a \emph{toric blow up}.

The moment map image of the standard symplectic blow up of $\C^n$
of size $\eps$  is obtained
from the moment map image $\R_+^n$ of $\C^n$ by corner chopping of size $\eps$. See Figure \ref{fig:BlC2} for $n=2$.

\begin{figure}
\setlength{\unitlength}{0.0006in}
\begingroup\makeatletter\ifx\SetFigFont\undefined%
\gdef\SetFigFont#1#2#3#4#5{%
  \reset@font\fontsize{#1}{#2pt}%
  \fontfamily{#3}\fontseries{#4}\fontshape{#5}%
  \selectfont}%
\fi\endgroup%
{\renewcommand{\dashlinestretch}{30}
\begin{picture}(4199,3300)(0,-10)
\dashline{60.000}(450,1050)(450,150)(1350,150)
\path(480.000,2905.000)(450.000,3025.000)(420.000,2905.000)
\path(450,3025)(450,1050)
\path(3405.000,120.000)(3525.000,150.000)(3405.000,180.000)
\path(3525,150)(1350,150)
\put(975,1350){\makebox(0,0)[lb]{\smash{{{\SetFigFont{12}{14.4}{\rmdefault}{\mddefault}{\updefault} $\sss |z_1|^2+|z_2|^2 \geq r^2$ }}}}}
\path(450,1050)(1350,150)
\put(225,525){\makebox(0,0)[lb]{\smash{{{\SetFigFont{12}{14.4}{\rmdefault}{\mddefault}{\updefault} $\sss \epsilon$ }}}}}
\put(0,3150){\makebox(0,0)[lb]{\smash{{{\SetFigFont{12}{14.4}{\rmdefault}{\mddefault}{\updefault} $\sss |z_2|^2/2$ }}}}}
\put(825,0){\makebox(0,0)[lb]{\smash{{{\SetFigFont{12}{14.4}{\rmdefault}{\mddefault}{\updefault} $\sss \epsilon$ }}}}}
\put(900,600){\makebox(0,0)[lb]{\smash{{{\SetFigFont{12}{14.4}{\rmdefault}{\mddefault}{\updefault} $\sss \epsilon$ }}}}}
\put(3600,75){\makebox(0,0)[lb]{\smash{{{\SetFigFont{12}{14.4}{\rmdefault}{\mddefault}{\updefault} $\sss |z_1|^2/2$ }}}}}
\end{picture}
}
\caption{Blow up of $\C^2$ of size $\displaystyle \eps = \frac{r^2}{2}$}
\labell{fig:BlC2}
\end{figure}

A toric
blow up of size $\eps$ of a symplectic toric manifold $(M,\omega,\Phi)$ at a fixed point $p$
amounts to chopping off a corner of size $\eps$ of its moment map image
$\Delta$ at the vertex $v=\Phi(p)$ to get a new polytope $\tilde{\Delta}$.
The preimage of the resulting new facet in $\tilde{\Delta}$ is the exceptional divisor 
in $\tilde{M}$. 
\end{noTitle}

We restrict our attention to symplectic toric manifolds of dimension $4$. 
Chopping off a corner of size $\eps$ of a polygon
$\Delta$ can be done if and only if there exist two adjacent edges
in $\Delta$ whose rational lengths are both strictly greater than $\eps$.
As a result, starting from a Delzant triangle of size $1$ 
we can perform one corner chopping of size $\epsilon > 0$  
if and only if $\eps < 1$, two or three corner choppings of size $\epsilon > 0$  if and only if $\eps < \half$, and no more than three corner choppings of the same size. Therefore,
\begin{Lemma} \labell{T equivariant cor}
$(\CP^2,\omega_\FS)$ admits a toric blow up of size $\eps > 0$ 
if and only if $\eps < 1$.
\ $(\CP^2,\omega_\FS)$ admits two or three toric blow ups 
of size $\eps > 0$ if and only if $\eps < \half$.
\ $(\CP^2,\omega_\FS)$ does not admit four or more toric blow ups 
of equal sizes.
\end{Lemma}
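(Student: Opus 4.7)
The plan is to translate the statement into a combinatorial question about Delzant polygons and settle it by direct edge-length bookkeeping. By the blow up / corner chopping dictionary recalled in \ref{blowup} and by Delzant's theorem, a sequence of $k$ toric blow ups of size $\eps$ of $(\CP^2,\omega_\FS)$ corresponds bijectively to a sequence of $k$ corner choppings of size $\eps$ applied to the Delzant triangle $\Delta_1$ that represents $(\CP^2,\omega_\FS)$. It therefore suffices to determine, for each $k$, the values of $\eps$ for which $\Delta_1$ admits a sequence of $k$ such choppings.

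The criterion recorded in \ref{corner chopping} is that a chopping of size $\eps$ at a vertex $v$ is possible exactly when both edges incident to $v$ have rational length strictly greater than $\eps$. For $k=1$ this is immediate: every vertex of $\Delta_1$ has two edges of length $1$, so one chopping is possible iff $\eps < 1$.

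For $k \geq 2$ I would keep track of how each chopping modifies the edge lengths: the two edges at the chopped vertex are each shortened by $\eps$, and a new edge of length exactly $\eps$ is introduced between the two new vertices. The key observation is that every new vertex inherits this length-$\eps$ edge, so a further chopping of size $\eps$ can never be performed at a new vertex. All choppings in the sequence must therefore take place at the three original vertices $A$, $B$, $C$ of $\Delta_1$. After a first chopping, say at $A$, each of $B$ and $C$ has edges of lengths $\{1, 1-\eps\}$, so a second chopping requires $1-\eps > \eps$, i.e., $\eps < \half$. After a second chopping at $B$, the remaining original vertex $C$ has both edges of length $1-\eps$, so a third chopping is possible iff the same condition $\eps < \half$ holds, and is then actually realizable by chopping at $C$. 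After three choppings all three original vertices have been used up, and every vertex of the resulting hexagon carries an edge of length $\eps$; this precludes any further chopping of size $\eps$ regardless of the value of $\eps$.

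There is no substantial obstacle in this argument; the only subtlety is excluding chopping sequences that use new vertices, which the length-$\eps$ edge observation handles at once. The whole proof is essentially the combinatorial verification indicated in the paragraph preceding the lemma and reduces to the bookkeeping above.
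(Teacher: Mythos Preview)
Your proposal is correct and follows exactly the approach the paper indicates in the paragraph preceding the lemma: translate toric blow ups into corner choppings of the Delzant triangle $\Delta_1$ and do the edge-length bookkeeping, observing that new vertices always carry a length-$\eps$ edge and hence cannot be chopped again at size $\eps$. The paper itself only sketches this and defers the details to \cite[Lemma 3.1]{kk}; your write-up supplies those details and matches the intended argument.
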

For a detailed proof, see \cite[Lemma 3.1]{kk}.

In $\R^2$, all Delzant polygons can be obtained 
by a simple recursive recipe:
\begin{Lemma} \labell{fulton}
\begin{enumerate}
\item \labell{f1}
Let $\Delta$ be a Delzant polygon with three edges. Then there exists 
a unique $\lambda > 0$ such that $\Delta$ is $\AGL(2,\Z)$-congruent
to the Delzant triangle $\Delta_\lambda$.
(See Example \ref{ex:Delzant}.)
\item \labell{f2}
Let $\Delta$ be a Delzant polygon with four or more edges.
Let $s$ be the non-negative integer such that the number of edges
is $4+s$.  Then there exist positive numbers $a \geq b > 0$, 
an integer $0 \leq k \leq 2a/b$, and positive numbers
$\delta_1, \ldots,\delta_s$, such that $\Delta$ is $\AGL(2,\Z)$-congruent 
to a Delzant polygon that is obtained from the Hirzebruch trapezoid 
$\Hirz_{a,b,k}$ 
(see Example \ref{ex:Delzant})
by a sequence of corner choppings of sizes $\delta_1, \ldots, \delta_s$.
\end{enumerate}
\end{Lemma}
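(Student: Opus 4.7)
For part (\ref{f1}), I normalize via $\AGL(2,\Z)$: translate a vertex of $\Delta$ to the origin and send its two primitive outgoing edge directions to $e_1, e_2$ by a $\GL(2,\Z)$ transformation, so the triangle becomes $(0,0), (\ell_1, 0), (0, \ell_2)$ with $\ell_1, \ell_2 > 0$ the rational lengths of the two axis-edges. The Delzant condition at $(\ell_1, 0)$, combined with convexity, forces the primitive direction of the hypotenuse at that vertex to be $(-m, 1)$ for a positive integer $m$; comparing with the edge vector $(-\ell_1, \ell_2)$ gives $m = \ell_1/\ell_2 \in \Z$. Symmetrically $\ell_2/\ell_1 \in \Z$, forcing $\ell_1 = \ell_2 =: \lambda$. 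Uniqueness of $\lambda$ follows from $\AGL(2,\Z)$-invariance of rational length.

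For part (\ref{f2}) I would induct on $s$. For the base case $s = 0$ (quadrilateral), I analogously set $u_1 = (-1, 0), u_2 = (0, -1)$. The Delzant conditions at the remaining vertices, together with the requirement that the four outward normals close up around the polygon, force $(u_3, u_4)$ to be either $((1, k), (0, 1))$ --- already the normal configuration of $\Hirz_{a,b,k}$ --- or $((1, 0), (p, 1))$ for some $p \in \Z$, which reduces to $\Hirz_{?,?,p}$-form after a $90^\circ$ rotation followed by a shear. Reflections then yield $k \geq 0$, and swapping axes if needed gives $a \geq b$.

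For the inductive step ($s \geq 1$, so $m := 4 + s \geq 5$), I reduce to $s - 1$ by finding an \emph{unchoppable edge} $d_i$: an index $i$ for which $u_{i-1} + u_{i+1} = u_i$. Removing $d_i$ and extending $d_{i-1}$ and $d_{i+1}$ to meet at a new vertex produces a Delzant polygon $\Delta'$ with $m - 1$ edges; the new vertex is Delzant because $\det(u_{i-1}, u_{i+1}) = \det(u_{i-1}, u_i - u_{i-1}) = \det(u_{i-1}, u_i) = 1$, and convexity together with existence of the intersection point follow from strict CCW monotonicity of the normals. Then $\Delta$ is recovered from $\Delta'$ by corner chopping of size $\delta_s$ equal to the rational length of $d_i$, so the induction hypothesis applied to $\Delta'$ completes the proof.

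The main obstacle is showing that an unchoppable edge exists whenever $m \geq 5$. Introducing $a_i \in \Z$ via $u_{i-1} + u_{i+1} = a_i u_i$, I would first establish the identity
$$ \sum_{i=1}^m a_i = 3m - 12, $$
most cleanly via Noether's formula on the smooth rational toric surface $M_\Delta$: $c_2(M_\Delta) = \chi(M_\Delta) = m$ (the number of $\T$-fixed points), and $c_1(M_\Delta)^2 = 2m - \sum a_i$ (from $c_1 = \sum D_i$ with toric intersection numbers $D_i \cdot D_{i+1} = 1$, $D_i^2 = -a_i$, and $D_i \cdot D_j = 0$ otherwise), so Noether's identity $c_1^2 + c_2 = 12$ yields the formula. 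For $m \geq 5$ the sum is at least $3$, hence some $a_i \geq 1$. A case analysis using the cyclic recurrence $u_{i+1} = a_i u_i - u_{i-1}$, closure after $m$ steps, and strict CCW monotonicity of the normals then rules out configurations where every positive $a_i$ is $\geq 2$, producing some $a_i = 1$ --- a result classical in the combinatorics of smooth complete two-dimensional fans.
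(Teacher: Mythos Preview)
The paper gives no argument; it simply cites \cite{fulton}, Section~2.5 and the Notes to Chapter~2. Your sketch is essentially the proof found there: normalize a vertex to handle the triangle and the quadrilateral by hand, and for $m = 4+s \geq 5$ locate an edge with $a_i = 1$ (a torus-invariant $(-1)$-curve on $M_\Delta$), reverse the corresponding corner chop, and induct. Your derivation of $\sum a_i = 3m - 12$ via Noether's formula is a pleasant geometric shortcut; Fulton reaches the same identity combinatorially, and the case analysis you defer to at the end is exactly the content of his discussion.

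One correction: ``swapping axes if needed gives $a \geq b$'' works only for $k = 0$. When $k \geq 1$ the two edges of rational length $b$ are the unique pair of equal length, so $b$ is an $\AGL(2,\Z)$-invariant of the quadrilateral and cannot be exchanged with $a$; for instance the Delzant trapezoid with vertices $(0,-\tfrac{3}{2}),(\tfrac{7}{2},-\tfrac{3}{2}),(\tfrac{1}{2},\tfrac{3}{2}),(0,\tfrac{3}{2})$ has edge lengths $3,\tfrac{7}{2},3,\tfrac{1}{2}$ and is not $\AGL$-congruent to any $\Hirz_{a',b',k'}$ with $a' \geq b'$. This reflects a slight overstatement in the lemma as written---the condition $a \geq b$ is never used later in the paper---rather than a defect in your method.
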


\begin{proof}
See \cite[Section~2.5 and Notes to Chapter 2]{fulton}. 
\end{proof}

\begin{noTitle} \labell{Z of edges}
For any Delzant polygon $\Delta$, consider the free Abelian group
generated by its edges:
\begin{equation} \labell{edge group}
 \Z[\text{edges of }\Delta] .
\end{equation}

The ``length functional"
$$ \Z[\text{edges of }\Delta] \to \R $$
is the homomorphism that associates to each basis element its rational 
length.
If $\Delta_{i+1}$ is obtained from $\Delta_i$ by a corner chopping,
we consider the injective homomorphism 
\begin{equation} \labell{injection}
 \Z[\text{edges of }\Delta_{i}] \hookrightarrow
 \Z[\text{edges of }\Delta_{i+1}] 
\end{equation}
whose restriction to the generators is defined in the following way.
If $d$ is an edge of $\Delta$ that does not touch the corner
that was chopped, then $d$ is mapped to the edge of $\Delta_{i+1}$
with the same outward normal vector.  If $d$ is an edge of $\Delta_i$
that touches the corner that was chopped, then $d$ is mapped
to $d+e$ where $e$ is the new edge of $\Delta_{i+1}$, created
in the chopping.

The definition of corner chopping in \ref{corner chopping} 
implies that the homomorphism 
\eqref{injection} respects the length homomorphisms.
\end{noTitle}

By induction and the definition of corner chopping we get the following lemma.
\begin{Lemma} \labell{bound}
Let 
$$ \Delta_0,\Delta_1,\ldots,\Delta_s $$
be a sequence of Delzant polygons such that, for each $i$,
the polygon $\Delta_i$ is obtained from the polygon $\Delta_{i-1}$
by a corner chopping of size $\delta_i$.  Then
\begin{enumerate}
\item \labell{j4}
the image of an edge $d$ of $\Delta_j$ by $s-j$ iterations of homomorphism \eqref{injection} is a linear combination
$\sum_{i=0}^{\ell} m_i c_i$, such that $c_0,\ldots,c_{\ell}$ are edges of $\Delta_s$ whose union $U_d$ is connected, $\ell \leq (s-j)$, and 
for $0 \leq i \leq \ell$, the coefficient $m_i$ is a non-negative integer that is less than or 
equal to $2^{s-j}$;
%$\length(d) = \sum_{i=0}^{j}{m_i \length(c_i)}$. 
we say that $d$ is \emph{given} 
by the chain $U_d$ with multiplicities $m_0,\ldots,m_{\ell}$. 
\item \labell{j1}
$ \area(\Delta_s) = \area(\Delta_0) 
                  - \half \delta_1^2 - \ldots - \half \delta_s^2 . $
\item \labell{j2}
$ \perimeter(\Delta_s) = \perimeter(\Delta_0) 
                       - \delta_1 - \ldots - \delta_s. $
\end{enumerate}
\end{Lemma}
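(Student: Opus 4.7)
The plan is to prove all three statements by induction on $s$ (respectively on $s-j$ for the first assertion), using the explicit local effect of a single corner chop carried out in $\AGL(2,\Z)$-adapted coordinates.

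For the area and perimeter formulas, both right-hand sides are additive in the sequence of chops, so it suffices to compute the change from $\Delta_{i-1}$ to $\Delta_i$ and iterate. Since area and rational length are $\AGL(2,\Z)$-invariant (every $A\in\GL(2,\Z)$ has $|\det A|=1$), I may assume the chopped vertex is the origin with adjacent edges along the positive coordinate axes. The chop of size $\delta_i$ then excises the triangle with vertices $(0,0)$, $(\delta_i,0)$, $(0,\delta_i)$, of area $\delta_i^2/2$, which gives the claimed area recursion. For the perimeter, the same chop removes two axis segments of rational length $\delta_i$ and inserts the segment from $(\delta_i,0)$ to $(0,\delta_i)$, whose direction vector $\delta_i\cdot(-1,1)$ has primitive part $(-1,1)$, so its rational length is again $\delta_i$; the net change in perimeter is therefore $-\delta_i$.

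For the first assertion, I induct on $s-j$. The base case $s=j$ is trivial ($\ell=0$, $m_0=1\leq 2^0$). For the inductive step, assume the image of $d$ in $\Z[\text{edges of }\Delta_{s-1}]$ is a connected chain $\sum_{i=0}^{\ell'} m_i' c_i'$ with $\ell'\leq s-j-1$ and $m_i'\leq 2^{s-j-1}$. The last chop occurs at some vertex $v\in\Delta_{s-1}$ and creates a single new edge $e$ in $\Delta_s$. By the definition of the injection, every edge of $\Delta_{s-1}$ not incident to $v$ maps to itself, while every edge incident to $v$ is sent to its shortened version plus $e$. Since $v$ is shared by at most two edges of $\Delta_{s-1}$, at most one new generator (namely $e$) is added to the support, giving $\ell \leq \ell'+1 \leq s-j$. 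The multiplicity of $e$ in the new image equals the sum of the $m_i'$ over the $c_i'$ incident to $v$, hence is at most $2\cdot 2^{s-j-1}=2^{s-j}$; all other multiplicities are inherited unchanged. Connectivity persists because $e$ is adjacent in $\partial\Delta_s$ to the shortened image of each chopped edge, so it glues onto the inductively connected chain.

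The main obstacle is the combinatorial bookkeeping in the first assertion: one must verify that the doubling of multiplicities can arise only when two adjacent edges of the current chain both meet the chopped corner (yielding the bound $2^{s-j}$), and that in every case at most one new generator enters the support at each step (yielding the bound $\ell\leq s-j$). Once one recognizes these as the two combinatorial consequences of what a corner chop does to a connected chain of edges, the induction closes cleanly.
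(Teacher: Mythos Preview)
Your proof is correct and follows exactly the route the paper indicates: the paper's entire proof reads ``By induction and the definition of corner chopping we get the following lemma,'' and what you have written is a faithful and accurate unpacking of that induction. Your local computations for area and perimeter in $\AGL(2,\Z)$-normalized coordinates are standard and correct, and your inductive bookkeeping for part~(1)---that each chop adds at most one new edge to the support and at most doubles the largest multiplicity---is precisely the content of the definition of the injection~\eqref{injection}.
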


\begin{Lemma} \label{possum}
Let $(M,\omega,\Phi)$ be a four-dimensional symplectic toric manifold, with moment-map polygon 
$\Delta$ of $n$ edges.
Then there are $n-2$ edges of $\Delta$ whose union is connected, such that the classes of their 
$\Phi$-preimages form a basis to 
$H_2(M;\Z)$. Moreover, for any $n-2$ edges of $\Delta$ whose union is connected,
 the classes of their preimages  form a basis to 
$H_2(M;\Z)$.
 \end{Lemma}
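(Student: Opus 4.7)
The plan is to induct on the number of edges $n$ of $\Delta$, using Lemma~\ref{fulton} to peel off one corner chopping at a time, and to prove the stronger ``moreover'' clause; the existence statement then follows immediately, since the $n$ edges of $\Delta$ form a cycle and any $n-2$ consecutive edges constitute a connected arc. The base cases are the Delzant triangle ($n=3$, $M\cong\CP^2$ with $H_2(M;\Z)=\Z[L]$, and the preimage of any single edge is a line of class $[L]$) and the Hirzebruch trapezoid ($n=4$, $H_2(M;\Z)\cong\Z[F]\oplus\Z[S]$, with the preimages of the two non-horizontal edges both in class $[F]$ and the preimages of the two horizontal edges in classes $[S]$ and $[S]+k[F]$). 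The connected pairs of edges in a quadrilateral are exactly the four adjacent pairs, each consisting of one fiber and one section and hence a $\Z$-basis of $H_2(M;\Z)$.

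For the inductive step, fix $n\geq 5$, assume the ``moreover'' clause for all Delzant polygons with fewer edges, and use Lemma~\ref{fulton} to write $\Delta$ as a corner chopping of a Delzant polygon $\Delta_0$ (with $n-1$ edges) at a vertex $v$ where two edges $d_1,d_2$ of $\Delta_0$ meet. Label the edges of $\Delta$ cyclically as $d_1',e,d_2',d_3,\dots,d_{n-1}$, where $d_1',d_2'$ are the shortened images of $d_1,d_2$ and $e$ is the new edge. Then $M$ is the equivariant symplectic blow-up of $M_0$ at $\Phi_0^{-1}(v)$, so $H_2(M;\Z)=H_2(M_0;\Z)\oplus\Z[E]$, where $[E]=[\Phi^{-1}(e)]$ is the class of the exceptional divisor. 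The injection~\eqref{injection} encodes the geometric relations $[\Phi^{-1}(d_i')]=[\Phi_0^{-1}(d_i)]-[E]$ for $i=1,2$, and $[\Phi^{-1}(d_i)]=[\Phi_0^{-1}(d_i)]$ for $i\geq 3$.

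Let $U$ be a connected union of $n-2$ edges of $\Delta$. Its complement in the $n$-cycle of edges is a pair of two consecutive missing edges, and a short case analysis on the position of this pair relative to $\{d_1',e,d_2'\}$ yields in each case a connected arc $J$ of $n-3$ edges of $\Delta_0$ such that, by the inductive hypothesis applied to $\Delta_0$ and $J$, the classes $\{[\Phi_0^{-1}(d)]:d\in J\}$ form a $\Z$-basis of $H_2(M_0;\Z)$. Augmenting by $[E]$ gives a $\Z$-basis of $H_2(M;\Z)$ that generates the same $\Z$-submodule as the preimage classes of edges in $U$---either directly (when $e\in U$), or via the relation $[E]=[\Phi_0^{-1}(d_i)]-[\Phi^{-1}(d_i')]$ combined with the IH-expression of $[\Phi_0^{-1}(d_i)]$ in the basis $J$ (when $e\notin U$ and some $d_i'\in U$). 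Hence the $U$-classes themselves form a $\Z$-basis of $H_2(M;\Z)$. The main obstacle is the bookkeeping in this case analysis---in particular verifying that for each of the five flavors of missing pair ($\{d_1',e\}$, $\{e,d_2'\}$, $\{d_2',d_3\}$, $\{d_{n-1},d_1'\}$, and $\{d_i,d_{i+1}\}$ for $3\leq i\leq n-2$) the associated missing pair in $\Delta_0$ is indeed two consecutive edges and the arc $J$ is well-defined---but this is a direct check from the cyclic ordering.
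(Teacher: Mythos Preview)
Your proposal is correct and uses the same inductive strategy as the paper---peel off one corner chopping via Lemma~\ref{fulton} and use the decomposition $H_2(M;\Z)=H_2(M_0;\Z)\oplus\Z[E]$---but it is considerably more thorough than the paper's own argument. The paper only spells out the \emph{existence} clause: given one connected basis $B_\Delta$ on $\Delta$, it explains how to promote it to one connected basis $B_{\tilde\Delta}$ on the blow-up, splitting into two cases according to whether $B_\Delta$ touches the chopped vertex. It does not explicitly treat the ``moreover'' clause. You go further and prove the ``moreover'' clause directly, by fixing an arbitrary connected arc $U$ of $n-2$ edges in $\Delta$, case-splitting on the position of the two missing edges relative to $\{d_1',e,d_2'\}$, and in each case exhibiting a connected arc $J$ of $n-3$ edges in $\Delta_0$ so that the unimodular change of basis between $\{[\Phi_0^{-1}(d)]:d\in J\}\cup\{[E]\}$ and the $U$-classes is visible from the relations $[\Phi^{-1}(d_i')]=[\Phi_0^{-1}(d_i)]-[E]$ (for $i=1,2$) together with the inductive hypothesis applied to $J$. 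This case analysis is sound; in particular, in the two cases where $e\notin U$ one may take $J=\{d_3,\dots,d_{n-1}\}$ (missing the consecutive pair $\{d_1,d_2\}$ in $\Delta_0$), and the transition matrix then has determinant $\pm1$ as you indicate.
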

%In our notation, $\N$ does not include $0$.

\begin{proof}
By Lemma \ref{fulton}, 
we can prove this by induction. 
In the induction step, 
suppose that $(\tilde{M}, \tilde{\omega},\tilde{\Phi})$ with moment map polygon 
$\tilde{\Delta}$ of $n+1$ edges is obtained by a toric blow up of $(M,\omega,\Phi)$ with moment map polygon $\Delta$. 
Let $B_{\Delta}$ be a set of  $n-2$ edges of $\Delta$ whose union is connected, such that the classes of their 
$\Phi$-preimages form a basis to 
$H_2(M;\Z)$. 
If $B_{\Delta}$ consists of an edge that touches the corner that was chopped, 
set $B_{\tilde{\Delta}}$ to be the edges of $\tilde{\Delta}$ with the same outward normal vector as the edges in  $B_{\Delta}$  plus the new edge $e$ of $\tilde{\Delta}$, created
in the chopping. If none of the edges in $B_{\Delta}$ touches the corner that was chopped, set $B_{\tilde{\Delta}}$ to be the edges of $\tilde{\Delta}$ with the same outward normal vector as the edges in  $B_{\Delta}$  plus one of the edges adjacent to $e$ in $\tilde{\Delta}$. 
\end{proof}

\begin{Corollary} \label{bett}
Let $(M,\omega,\Phi)$ be a four-dimensional symplectic toric manifold, with moment-map polygon 
$\Delta$.
The number of edges of $\Delta$ is equal to the second Betti number
$\dim H_2(M)$ plus two. 
\end{Corollary}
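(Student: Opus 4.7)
The plan is to deduce this corollary immediately from Lemma \ref{possum}. That lemma provides, for a four-dimensional symplectic toric manifold whose moment-map polygon $\Delta$ has $n$ edges, a collection of $n-2$ edges whose preimages under $\Phi$ yield a $\Z$-basis of $H_2(M;\Z)$. Since a basis has cardinality equal to the rank (and here the free rank equals the dimension), the second Betti number $\dim H_2(M)$ must be $n-2$, whence $n = \dim H_2(M) + 2$.

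So concretely, I would state: let $n$ denote the number of edges of $\Delta$, apply Lemma \ref{possum} to obtain $n-2$ edges of $\Delta$ whose preimages form a $\Z$-basis of $H_2(M;\Z)$, and conclude that $\dim H_2(M) = n-2$, which gives the desired equality upon rearranging.

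There is no real obstacle here: the content of the corollary is entirely packaged inside Lemma \ref{possum}, and the only thing being used beyond that lemma is the fact that a basis of a free abelian group determines its rank. The proof is essentially one sentence long.
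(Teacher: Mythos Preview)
Your proposal is correct and matches the paper's approach exactly: the paper states Corollary~\ref{bett} immediately after Lemma~\ref{possum} without further proof, so the intended argument is precisely the one-line deduction you give. There is nothing to add.
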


By the Delzant theorem, every toric action on $\CP^2$ is obtained
from a symplectomorphism of $\CP^2$ with a symplectic toric 
manifold $M_\Delta$ that is associated to a Delzant polygon $\Delta$.  
By Corollary \ref{bett}, $\Delta$ must be a triangle. 
By part \ref{f1} of Lemma \ref{fulton},
$\Delta$ is $\AGL(2,\Z)$-congruent
to a Delzant triangle $\Delta_\lambda$;
(see Example \ref{ex:Delzant}).
By our normalization convention for the Fubini-Study form, $\lambda=1$.
It follows that 
\begin{Lemma} \labell{toric is standard}
Every toric $\T^2$-action on $\CP^2$ is equivariantly symplectomorphic 
to the standard action. %through an isomorphism of the torus $T$ 
%with $(S^1)^2$.  
\end{Lemma}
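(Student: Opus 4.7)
The plan is to combine Delzant's classification theorem with the identification of the moment polygon of a toric action on $\CP^2$. Given any toric $\T^2$-action on $(\CP^2,\omega_{\FS})$, the Delzant theorem supplies an equivariant symplectomorphism with some $(M_\Delta,\omega_\Delta,\Phi_\Delta)$, with $\Delta$ unique up to $\AGL(2,\Z)$-congruence. The task therefore reduces to showing that the polygon $\Delta$ is $\AGL(2,\Z)$-congruent to the standard triangle $\Delta_1$ of Example \ref{ex:Delzant}.

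First I would count edges: by Corollary \ref{bett}, the number of edges of $\Delta$ equals $\dim H_2(\CP^2)+2=3$, so $\Delta$ is a triangle. Then by Lemma \ref{fulton}(\ref{f1}) there is a unique $\lambda>0$ such that $\Delta$ is $\AGL(2,\Z)$-congruent to the Delzant triangle $\Delta_\lambda$.

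To pin down $\lambda$, I would use that area is invariant under $\AGL(2,\Z)$-congruence, so $\area(\Delta)=\area(\Delta_\lambda)=\lambda^2/2$. On the other hand, the normalization $\frac{1}{2\pi}\int_{\CP^1}\omega_{\FS}=1$ gives $[\omega_{\FS}]=2\pi\,h$ for $h$ the positive generator of $H^2(\CP^2;\Z)$, so $\int_{\CP^2}\omega_{\FS}\wedge\omega_{\FS}=(2\pi)^2$. Applying Lemma \ref{perimeter and area}(\ref{i4}) then yields $\area(\Delta)=1/2$, hence $\lambda=1$. Since the standard toric action on $(\CP^2,\omega_{\FS})$ described in Example \ref{ex:Delzant} also has moment polygon $\Delta_1$, the uniqueness statement of the Delzant theorem completes the proof.

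There is no substantive obstacle here; the lemma is essentially an assembly of previously established results. The only mild subtlety is the interpretation of ``equivariantly symplectomorphic'' when two Delzant polygons are $\AGL(2,\Z)$-congruent rather than equal: the congruence corresponds, at the level of toric manifolds, to composing with an automorphism of $\T^2$ and a translation of the moment map, which is already built into the Delzant classification as the paper uses it.
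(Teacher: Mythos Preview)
Your proof is correct and follows essentially the same route as the paper: Delzant's theorem, Corollary \ref{bett} to get three edges, Lemma \ref{fulton}(\ref{f1}) to identify $\Delta$ with $\Delta_\lambda$, and then the normalization of $\omega_{\FS}$ to force $\lambda=1$. The only cosmetic difference is that you pin down $\lambda$ via the area formula (Lemma \ref{perimeter and area}(\ref{i4})), whereas the paper uses the edge-length interpretation from Example \ref{ex:Delzant}; both are immediate.
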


%By the Delzant theorem, every toric action on $\CP^2$ is obtained
%from a symplectomorphism of $\CP^2$ with a symplectic toric manifold
%$M_\Delta$ that is associated to a Delzant polygon $\Delta$.
%The second Betti number $\dim H_2(M_\Delta)$
%is equal to the number of edges of $\Delta$ minus two;
%this follows from Morse theory for components of the moment map.
%So $\Delta$ must be a triangle.
%Up to rescaling, every Delzant triangle can be obtained from 
%a standard one, \eqref{Delzant triangle}, by a transformation
%$x \mapsto Ax+b$ where $A \in \GL(2,\Z)$ and $b \in \R^2$.
%It follows that 
%\begin{Lemma} \labell{toric is standard}
%Every toric $T$-action on a compact connected symplectic four-manifold
%$(M,\omega)$ with $\dim H_2(M) = 1$
%and $\text{volume}(M) = \text{volume}(\CP^2)$
%is equivariantly symplectomorphic to the standard action on $\CP^2$
%through an isomorphism of the torus $T$ with $(S^1)^2$.
%\end{Lemma}

%%%%%%%%%%%%%%%%%%%%%%%%%%%%%%%%%%%%%%%%%%%%%%%%%%%%%%%%%%%%%%%%%%%%%%%%%%%%%%%%
\section{$J$-holomorphic spheres in symplectic $4$-manifolds} 
%%%%%%%%%%%%%%%%%%%%%%%%%%%%%%%%%%%%%%%%%%%%%%%%%%%%%%%%%%%%%%%%%%%%%%%%%%%%%%%%
\labell{sec:holomorphic}

In this section we will highlight results from the theory of J-holomorphic curves that we will use for the proof 
of Lemma \ref{push11}, 
and to show uniqueness of symplectic blow downs in the appendix.

Let $(M,\omega)$ be a compact symplectic manifold. 
Let  $\J=\J(M,\omega)$ be the space of almost complex structures on $M$ 
that are 
compatible with $\omega$. The space $\J$ is contractible \cite{intro}. 
Given $J \in \J$,
a \emph{parametrized $J$-holomorphic sphere} is a map $u \colon \CP^1 \to M$, 
such that $du \colon T\CP^1 \to TM$
satisfies the Cauchy-Riemann equation $du \circ i = J \circ du$.
Such a $u$ represents a homology class in $H_2(M;\Z)$ that we denote $[u]$.
A $J$-holomorphic sphere 
is called \emph{simple} if it cannot be factored through a branched 
covering of $\CP^1$.
One similarly defines a holomorphic curve in $(M,J)$ whose domain is a Riemann 
surface other than $\CP^1$.

For any class $A\in H_{2}(M;\Z)$, consider the universal moduli space
of simple parametrized holomorphic spheres in the class $A$,
$$ \M(A,\J) = \{ (u,J) \ | \ J \in \J, 
 u \colon \CP^1 \to M \text{ is simple $J$-holomorphic, and } [u]=A \},$$ 
and the projection map
$$ p_{A} \colon \M(A,\J) \to \J. $$
For a fixed $J \in \J$, we denote by $\M(A,J)$ the space $p_{A}^{-1}(J)$.

The automorphism group $\PSL(2,\C)$ of $\CP^1$ 
acts on
$\M(A,\J)$ by reparametrizations.
The quotient $\M(A,\J) / \PSL(2,\C)$ is the space of unparameterized J-holomorphic spheres representing
$A \in H_2(M)$.

\begin{Lemma} \labell{frpr}
Let $0 \neq A \in H_2(M;\Z)$.
The action of $G=\PSL(2,\C)$ on $\M(A,\J)$ is
\begin{enumerate}
\item free, %i.e., the stabilizer $G_u=\{\psi \in G | u \circ \psi = u\}$ is trivial for all $u \in \M(A,\J)$.
\item proper. %, i.e., the action map $(\psi,u) \mapsto (u \circ \psi,u)$ is proper.
\end{enumerate}
\end{Lemma}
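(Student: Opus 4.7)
The plan is to reduce both claims to the standard somewhere-injective property for simple $J$-holomorphic spheres (due to McDuff; see \cite[\S 2.5]{intro}): for any simple $J$-holomorphic $u \colon \CP^1 \to M$, the set of \emph{injective points} --- those $z \in \CP^1$ with $du(z) \neq 0$ and $u^{-1}(u(z)) = \{z\}$ --- is open and dense in $\CP^1$. I would invoke this as a black box throughout.

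For freeness, suppose $\phi \in G$ stabilizes $(u, J)$, i.e.\ $u \circ \phi = u$. At any injective point $z$ of $u$, the equation $u(\phi(z)) = u(z)$ forces $\phi(z) = z$, because $z$ is the unique preimage of $u(z)$. A M\"obius transformation with more than two fixed points is the identity, so the density of injective points yields $\phi = \id$.

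For properness, I would argue by contradiction. Take sequences $(u_n, J_n) \to (u, J)$ in $\M(A,\J)$ and $g_n \in G$ such that $(u_n \circ g_n^{-1}, J_n) \to (v, J)$ in $\M(A,\J)$, and suppose no subsequence of $g_n$ converges in $G$. Since $\PSL(2,\C)$ acts on $\CP^1$ with the standard attractor--repeller dynamics at infinity (a consequence of the $KAK$ decomposition), after passing to a subsequence there exist points $p, q \in \CP^1$ such that $g_n^{-1} \to q$ uniformly on compact subsets of $\CP^1 \setminus \{p\}$. Combined with the uniform convergence $u_n \to u$, this gives $u_n \circ g_n^{-1} \to u(q)$ uniformly on compact subsets of $\CP^1 \setminus \{p\}$. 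But by hypothesis $u_n \circ g_n^{-1} \to v$ uniformly on all of $\CP^1$, so $v$ is identically equal to the constant $u(q)$ on a dense open set, hence on all of $\CP^1$ by continuity. This contradicts $[v] = A \neq 0$, since the class of a constant map is $0$.

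The main obstacle is really the two non-trivial classical inputs --- the existence of injective points for simple maps, and the attractor--repeller dichotomy for divergent sequences in $\PSL(2,\C)$. Once these are in hand, freeness is immediate, and properness is a short limit argument. The essential new ingredient in the properness half, compared to freeness, is the use of $A \neq 0$ to exclude the degenerate limit $v \equiv u(q)$; without this hypothesis (e.g.\ for constant maps) the $G$-action would visibly fail to be proper.
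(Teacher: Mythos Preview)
Your proof is correct and follows essentially the same route as the paper's. For freeness, the paper simply asserts that simplicity implies trivial stabilizer, whereas you spell out the injective-point argument behind that assertion; for properness, both you and the paper invoke the attractor--repeller dichotomy for divergent sequences in $\PSL(2,\C)$ (the paper cites \cite[Lemma D.1.2]{nsmall}) and then use $A \neq 0$ to rule out a constant limit.
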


\begin{proof}
For any sphere $u \in \M(A,\J)$, since $u$ is simple, the stabilizer $G_u=\{\psi \in G | u \circ \psi = u\}$ is trivial; this proves part (1). 

For part (2), we need to show that the action map $(u,\psi) \mapsto (u, u \circ \psi)$ is proper.
Let $K \subset \M(A,\J) \times \M(A,\J)$ be a compact subset. Without loss of generality 
$K = K_1 \times K_2$, where $K_1$ and $K_2$ are compact in $\M(A,\J)$. %%%%
Because $\M(A,\J)$ is Hausdorff and first countable, \comment{do we need to justify this?}
it is enough to show that for every sequence
$\{ (u_n,\psi_n) \}$ in the preimage of $K_1 \times K_2$ there exists a subsequence
such that $\{\psi_n\}$ converges uniformly and $\{u_n\}$ converges in the $C^{\infty}$ topology.
Take such a sequence $\{u_n,\psi_n\}$. Because $u_n \in K_1$ and $K_1$ is compact, after passing to a subsequence we may assume that $\{u_n\}$ 
$C^\infty$-converges.

 By \cite[Lemma D.1.2]{nsmall}, if the sequence $\psi_n$ does not have a uniformly convergent subsequence, then there exist points $x,y \in \CP^1$ and a subsequence $\psi_{\mu}$ which converges to the point $y$ uniformly in compact subsets of $\CP^1 \setminus \{x\}$, in particular $\psi_{\mu}$ converges to a point on half sphere, hence $u_{\mu} \circ \psi_{\mu}$, restricted to half sphere, converge to a constant map. However, the sequence of holomorphic spheres $\{u_n \circ \psi_n\}$, (as a sequence in the compact subset $K_2$ of $\M(A,\J)$), has a $C^{\infty}$-convergent (hence u.c.s-convergent) %%%%u.c.s
subsequence whose limit is in the nontrivial homology class $A$; 
%non constant holomorphic spheres cannot converge to a constant on half sphere, %%%%
and we get a contradiction.
\end{proof}

Gromov, in \cite{gromovcurves}, introduced a notion of ``weak convergence"
of a sequence of holomorphic curves. This notion is preserved under
reparametrization of the curve, and it implies convergence in homology. 
\emph{Gromov's compactness theorem} guarantees that, given a converging
sequence of almost complex structures, a corresponding sequence 
of holomorphic curves with bounded symplectic area has a weakly converging
subsequence. 
The limit under weak convergence might not be a curve; it might
be a cusp curve, 
that is, a connected union of holomorphic curves.
As a result of Gromov's compactness, we have the following lemma.
\begin{Lemma}\labell{lem:gromov}
Let $\{ J_n \} \subset \J$ be a sequence of almost complex structures
that converges in the $C^\infty$ topology to an almost complex
structure $J_\infty \in \J$.  For each $n$, let $f_n \colon \CP^1 \to M$ 
be a parametrized $J_n$-holomorphic sphere.  Suppose that the set of areas 
$\omega([f_n])$ is bounded.  Then one of the following two possibilities
occurs.
\begin{enumerate}
\item
There exists a $J_\infty$-holomorphic sphere $u \colon \CP^1 \to M$ 
and elements $A_n \in \PSL(2,\C)$ such that a subsequence of the
$f_n \circ A_n $'s converges to $u$ in the $C^\infty$ topology.
In particular, there exist infinitely many $n$'s for which $[f_n] = [u]$.
\item
There exist two or more non-constant simple $J_\infty$-holomorphic
spheres $u_\ell \colon \CP^1 \to M$ and positive integers
$m_\ell$, for $\ell = 1, \ldots, L$, and infinitely many $n$'s for which
$$ [f_n] = \sum_{\ell=1}^L m_\ell [u_\ell] \qquad \text{ in } H_2(M).$$
\end{enumerate}
\end{Lemma}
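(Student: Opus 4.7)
The plan is to invoke Gromov's compactness theorem directly on the sequence $\{f_n\}$. Since the almost complex structures $J_n$ converge in the $C^\infty$ topology to $J_\infty \in \J$ and the symplectic areas $\omega([f_n])$ are uniformly bounded, Gromov's theorem (in the formulation of, e.g., \cite[Chapter 4]{nsmall}) produces, after passing to a subsequence, a \emph{weak limit} which is a $J_\infty$-holomorphic cusp curve: a connected bubble tree consisting of finitely many non-constant $J_\infty$-holomorphic spheres $v_1,\ldots,v_K \colon \CP^1 \to M$. A key property of weak convergence is that it preserves the total homology class, so for every $n$ in the chosen subsequence one has $[f_n] = [v_1] + \cdots + [v_K]$ in $H_2(M;\Z)$.

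I then split the argument into two cases. If $K = 1$ and no bubbling develops, then weak convergence refines to $C^\infty$-convergence after reparametrization: there exist $A_n \in \PSL(2,\C)$ so that $f_n \circ A_n$ converges in $C^\infty$ to $u := v_1$, and in particular $[f_n] = [u]$ for all sufficiently large $n$ in the subsequence; this is alternative (1). If instead $K \geq 2$, each non-constant $v_k$ factors, by the standard factorization result for $J$-holomorphic spheres (see \cite[Proposition 2.5.1]{nsmall}), as $v_k = u_k \circ \varphi_k$ with $u_k$ a simple $J_\infty$-holomorphic sphere and $\varphi_k \colon \CP^1 \to \CP^1$ a branched cover of degree $m_k \geq 1$, so that $[v_k] = m_k [u_k]$. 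Grouping indices by reparametrization-equivalence of the underlying $u_k$'s and summing multiplicities yields a decomposition $[f_n] = \sum_{\ell=1}^L m_\ell [u_\ell]$, which is alternative (2).

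The main obstacle is the invocation of Gromov's compactness theorem itself, a substantial analytic result whose full proof is available in the standard references; the rest is essentially bookkeeping to convert the raw bubble-tree output of compactness into a list of simple spheres with multiplicities. A subtle point worth verifying carefully is that whenever alternative (1) fails one really lands in alternative (2) --- in particular, a Gromov limit consisting of several multiply covered copies of a single simple sphere should either be interpreted under alternative (2) or subsumed under (1) by taking $u$ to be one of those multiple covers; I will adopt whichever convention keeps the statement cleanly applicable in the uses made in Section \ref{push} and the appendix.
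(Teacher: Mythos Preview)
Your proposal is correct and is exactly the standard argument; the paper itself does not give a proof but only cites \cite[Lemma~A.3]{finite}, whose content is precisely the Gromov compactness bookkeeping you outline.

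One remark on the worry you raise at the end: the lemma as stated does not require the simple spheres $u_\ell$ to be pairwise distinct. So if the Gromov limit is a bubble tree with $K\geq 2$ non-constant components all of which happen to be (multiple covers of) the same simple sphere $w$, you simply list them as $u_1,\ldots,u_K$ with $[u_\ell]=[w]$ and the appropriate multiplicities, and you are squarely in alternative~(2). This reading is also the one needed for the applications: in the proof of Lemma~\ref{ue}, the contradiction with Lemma~\ref{rock2} only uses that there are at least two non-constant components, each of strictly smaller symplectic area than $\omega(E)$, so that positivity of intersections with the embedded sphere $u$ applies --- distinctness of the $u_\ell$ is irrelevant. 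Hence your dichotomy is complete as written and no ad hoc convention is needed.
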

For details, see \cite[Lemma A.3]{finite}.

In the proof of Lemma \ref{push11}, we will use the following Lemma.
\begin{Lemma} \labell{jexis}
Let $(M,\omega)$ be a closed symplectic four-manifold. Let $E \in H_2(M;\Z)$
be a homology class that can be represented by an embedded symplectic sphere and 
such that $c_1(TM)(E)  = 1$. 
Then for every $J \in \J$ there exists a $J$-holomorphic cusp curve 
in the class $E$.
\end{Lemma}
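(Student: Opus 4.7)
The plan is to first produce a smooth $J_{0}$-holomorphic representative of $E$ for a carefully chosen $J_{0} \in \J$, then deform to the given $J$ by combining a cobordism argument with Gromov's compactness theorem.

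First I would pick $J_{0} \in \J$ for which the given embedded symplectic sphere $\Sigma$ representing $E$ is $J_{0}$-holomorphic; such a $J_{0}$ exists by the standard construction of fixing a compatible complex structure on $T\Sigma \oplus N_{\Sigma}$ and extending it to all of $TM$. The adjunction formula for an embedded $J_{0}$-holomorphic sphere then gives
$$\Sigma \cdot \Sigma = c_{1}(E) - 2 = -1,$$
so $E$ is an exceptional class. Moreover, positivity of intersections in dimension four (Micallef--White, McDuff) rules out a second distinct simple $J_{0}$-holomorphic sphere in class $E$, since two such spheres would have nonnegative geometric intersection contradicting $E \cdot E = -1$. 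Thus, up to reparametrization, $\Sigma$ is the unique simple $J_{0}$-sphere in class $E$.

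Next I would observe that the expected dimension of the unparametrized moduli space of simple $J'$-spheres in class $E$ is $2n + 2c_{1}(E) - 6 = 0$. Choosing a generic smooth path $\{J_{t}\}_{t \in [0,1]}$ from $J_{0}$ to $J$, the parametric unparametrized moduli space of simple spheres in class $E$ is a smooth $1$-manifold whose only non-compact ends lie either over $t \in \{0,1\}$ or at cusp-curve degenerations. Starting with the point $[\Sigma]$ in the fiber over $t = 0$ and following its component, the component either reaches $t = 1$, producing a smooth simple $J$-holomorphic sphere in class $E$, or exits as a cusp curve at some $t_{*} \leq 1$. If $t_{*} = 1$ we are done. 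Otherwise I would extract a sequence $J_{n} \to J$ of regular structures, each admitting a smooth simple $J_{n}$-representative $f_{n}$ of $E$ by the same cobordism argument, and apply Lemma~\ref{lem:gromov} to $\{f_{n}\}$; since $\omega([f_{n}]) = \omega(E)$ is bounded, a subsequence of $f_{n} \circ A_{n}$ converges in $C^{\infty}$ either to a smooth $J$-holomorphic sphere (case~(1) of the lemma) or to a $J$-holomorphic cusp curve $\sum m_{\ell} [u_{\ell}] = E$ (case~(2) of the lemma). In both cases we produce the required $J$-holomorphic representative of $E$.

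The main obstacle is the cobordism step: one must ensure that the component of the parametric moduli space starting at $[\Sigma]$ persists to a representative at $t = 1$, and that a cusp-curve end at some $t_{*}$ really provides a $J_{t_{*}}$-holomorphic cusp curve in class $E$. This calls for transversality for a generic path, for positivity of intersections to control the count at $J_{0}$, and for Gromov compactness applied component-by-component to the simple subspheres of any cusp-curve end (each of strictly smaller $\omega$-area, which sets up an inductive framework). These are standard but technical ingredients in the Gromov--Taubes theory of exceptional classes in symplectic four-manifolds.
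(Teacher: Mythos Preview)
Your endgame coincides with the paper's: approximate the given $J$ by structures $J_n$ for which $E$ has a smooth $J_n$-holomorphic representative, then apply Gromov compactness (Lemma~\ref{lem:gromov}). The paper does this directly: it shows that the set $U_K$ of structures admitting no sphere in any class $A$ with $c_1(TM)(A)\le 0$ and $\omega(A)<K=\omega(E)$ is open and dense in $\J$, and that every $J'\in U_K$ carries an embedded $J'$-sphere in $E$ (Lemma~\ref{repdense}). Picking $J_n\in U_K$ with $J_n\to J$ and applying compactness once finishes the proof; no cobordism is needed.

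Your cobordism route arrives at the same place but with a detour, and the ``otherwise'' branch is where the real content hides. When the component of $[\Sigma]$ exits as a cusp curve at some $t_*<1$, you propose to take regular $J_n\to J$, each admitting a smooth representative ``by the same cobordism argument.'' But a cobordism from $J_0$ to $J_n$ may equally well exit before reaching $J_n$; nothing you have written rules this out. What prevents it is that for \emph{generic} $J_n$ (and along a generic path to it) no spheres occur in classes with $c_1\le 0$, so cusp curves cannot form and the parametric moduli space is compact. That is exactly the statement that $U_K$ is dense---the paper's approach. Once you have it, the cobordism is superfluous: take $J_n\in U_K$ and invoke Lemma~\ref{repdense} directly. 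The inductive scheme on $\omega$-area you mention at the end does not close this gap either, since the components $u_\ell$ of a cusp-curve limit need not satisfy $c_1([u_\ell])=1$ nor be representable by embedded symplectic spheres, so the hypotheses of the lemma do not apply to them.
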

%%%%first dim 4 assumption

To deduce the lemma from Gromov's compactness we need the existence of a dense set $U \subset \J$ such that for any $J \in U$, the class $E$ is represented by an embedded $J$-holomorphic sphere. 

For any positive number $K$, let
$$ \calN_K = \{ A \in H_2(M;\Z) \ | \ 
 A \neq 0 , c_1(TM)(A) \leq 0 , \text{ and } \omega(A) < K \} . $$
 The importance of this set is in the fact that if a homology class $E$ with $\omega(E) \leq K$ and 
 $c_1(TM)(E) \leq 1$ is represented by a J-holomorphic cusp curve  with two or more components, then at least one of these
 components must lie in a homology class in $\calN_K$; see Lemma A.5 in \cite{finite}. 
Let
$$ U_K = \J \ssminus\bigcup_{A \in \calN_K} \image p_A .$$

Let $(M,\omega)$ be a compact symplectic four-manifold.
Then the subset $U_K \subset \J$ is open, dense, and path connected.
This is proved in \cite[Sec. 3]{McD:topology}
and \cite[Lemma~3.1]{McDuff-Structure}, 
and presented in \cite{finite}: Lemma A.8 and Lemma A.10. 
The following is also shown there.

\begin{Lemma} \labell{repdense}
Let $(M,\omega)$ be a compact symplectic four-manifold.
Let $E \in H_2(M)$ be a homology class that can be represented
by an embedded symplectic sphere and such that $c_1(TM)(E) = 1$.  Then
\begin{enumerate}
\item
The projection map $ p_E \colon \M(E,\J) \to \J $ is open.
\item
Let $K \geq \omega(E)$. Then, for any $J \in U_K$, 
the class $E$ is represented by an embedded $J$-holomorphic sphere. 
\end{enumerate}
\end{Lemma}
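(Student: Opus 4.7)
The plan is to prove the two parts in sequence, with part (1) supplying the key openness input for the connectedness argument that drives part (2).

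For part (1), I would apply the implicit function theorem to the universal Cauchy-Riemann equation, viewing the defining equation $\bar\partial_J u = 0$ of $\M(E,\J)$ as a section of an appropriate Banach bundle over a Sobolev completion of pairs $(u,J)$. At any $(u_0,J_0) \in \M(E,\J)$, the linearization splits into a $u$-derivative and a $J$-derivative; the $J$-derivative is surjective because one may freely perturb $J$ along the image of $u_0$, and the $u$-derivative $D_{u_0}\bar\partial_{J_0}$ is surjective by the Hofer--Lizan--Sikorav automatic transversality theorem, applicable because $u_0$ is a simple sphere in a $4$-manifold with $c_1(TM)(E)=1>0$. Thus $\M(E,\J)$ is a smooth Banach manifold near $(u_0,J_0)$ and $p_E$ is a submersion, hence open.

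For part (2), define
$$V = \{J \in \J : E \text{ is represented by an embedded } J\text{-holomorphic sphere}\}.$$
Observe first that a simple $J$-holomorphic sphere in class $E$ is automatically embedded: adjunction applied to any compatible $J$ realising the given embedded symplectic representative gives $E\cdot E = c_1(TM)(E) - 2 = -1$, and McDuff's adjunction equality $[u]\cdot[u] - c_1(TM)([u]) + 2 = 2\delta(u) \geq 0$ for simple spheres forces $\delta(u)=0$ whenever $[u]=E$. Consequently $V = \image(p_E)$, which is open in $\J$ by part (1). Since $U_K$ is path-connected, it suffices to show that $V \cap U_K$ is non-empty, open, and closed in $U_K$. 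Non-emptiness follows from density of $U_K$ in $\J$ combined with $V \neq \emptyset$ (which in turn uses the existence of an $\omega$-compatible $J_0$ making the given embedded symplectic representative $J_0$-holomorphic). Openness of $V \cap U_K$ in $U_K$ is inherited from openness of $V$ in $\J$.

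The main obstacle is closedness. Take a sequence $J_n \to J_\infty$ in $U_K$ with embedded $J_n$-holomorphic spheres $u_n$ in class $E$. Since $\omega([u_n]) = \omega(E) \leq K$ is uniformly bounded, Lemma \ref{lem:gromov} applies. Case (2) would give a cusp decomposition $E = \sum_{\ell=1}^L m_\ell [u_\ell]$ with $L\geq 2$ simple non-constant summands; combined with $c_1(TM)(E)=1$ and $m_\ell \geq 1$, at least one $[u_\ell]$ satisfies $c_1(TM)([u_\ell])\leq 0$, and since $0 < \omega([u_\ell]) < \omega(E) \leq K$ this class lies in $\calN_K$, contradicting $J_\infty \in U_K \subset \J \ssminus \bigcup_{A\in\calN_K}\image p_A$. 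So case (1) must hold, producing a $J_\infty$-holomorphic sphere $u$ with $[u]=E$. A quick multiple-cover check rules out $u$ being a non-trivial branched cover: if $u = v \circ p$ for a $k$-fold cover $p$ with simple $v$, then $E=k[v]$ forces $-1 = k^2[v]\cdot[v]$ and $1 = k\,c_1(TM)([v])$, so $k=1$. Hence $u$ is simple, adjunction gives $\delta(u)=0$, and $u$ is embedded, so $J_\infty \in V \cap U_K$, completing the closedness step and the proof.
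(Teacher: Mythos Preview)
Your proof is correct. The paper does not actually prove this lemma in-text; it simply refers the reader to \cite[Lemma A.12]{finite}. Your argument supplies exactly the standard ingredients that reference contains: Hofer--Lizan--Sikorav automatic transversality to make $p_E$ a submersion (hence open), and an open--closed argument on the path-connected set $U_K$, with Gromov compactness ruling out bubbling via the definition of $\calN_K$, for part~(2).

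One small organizational point: the HLS criterion you invoke in part~(1) applies to \emph{immersed} spheres, and you only establish that every simple sphere in class $E$ is embedded (hence immersed) via adjunction in part~(2). Since the adjunction computation $E\cdot E=-1$, $\delta(u)=0$ is independent of part~(1), you could simply move that sentence forward so that HLS is visibly applicable when you use it.
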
 
For proof, see \cite[Lemma A.12]{finite}. 

Lemma \ref{jexis} now follows. 

For the proof of Theorem \ref{main1},
we also need the following Lemmas. 

\begin{Lemma} \labell{adjunction} %%%%check cross references
Let $(M,\omega)$ be a compact symplectic four-manifold. 
Let $A \in H_2(M;\Z)$ be a homology class which is represented
by an embedded symplectic sphere $C$.  Then
\begin{enumerate}
\item
There exists an almost complex structure $J_0 \in \J$
for which $C$ is a $J_0$-holomorphic sphere.
\item
For any $J \in \J$ and any simple parametrized $J$-holomorphic sphere 
$f \colon \CP^1 \to M$ in the class $A$, the map $f$ is an embedding.
\end{enumerate}
\end{Lemma}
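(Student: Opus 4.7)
The plan is to prove the two parts separately using standard symplectic-geometric machinery.

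For part (1), the goal is to construct an $\omega$-compatible almost complex structure $J_0$ on $M$ that preserves $TC$. Since $C$ is symplectic, the restriction $\omega|_C$ is a symplectic form on $C$ and the symplectic normal bundle $NC \to C$ (the $\omega$-orthogonal complement to $TC$ in $TM|_C$) is a symplectic vector bundle with a nondegenerate fiberwise form induced by $\omega$. I would first pick an almost complex structure $J_C$ on $C$ that is compatible with $\omega|_C$, and independently pick a compatible complex structure $J_N$ on the fibers of $NC$; together these assemble into a compatible complex structure on the bundle $TM|_C = TC \oplus NC$. By contractibility of the space of compatible almost complex structures on a symplectic vector bundle (see \cite[\S 2.6]{intro}), this extends to a compatible almost complex structure on $M$, yielding the desired $J_0$. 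By construction $J_0$ preserves $TC$, so the inclusion $C \hookrightarrow M$ is $J_0$-holomorphic.

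For part (2), the tool of choice is McDuff's adjunction formula for simple $J$-holomorphic spheres in a symplectic four-manifold: if $f \colon \CP^1 \to M$ is simple and $J$-holomorphic, then
\begin{equation*}
A \cdot A \;=\; -2 + c_1(TM)(A) + 2\delta(f),
\end{equation*}
where $\delta(f) \geq 0$ is a nonnegative integer counting the singularities of $f$ with appropriate multiplicities, and $\delta(f) = 0$ if and only if $f$ is an embedding. To compute $A \cdot A$, I apply the formula to the embedded symplectic sphere $C$, viewed as $J_0$-holomorphic by part (1): since $C$ is embedded, $\delta = 0$, so $A \cdot A = -2 + c_1(TM)(A)$. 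Now applied to the given simple $J$-holomorphic $f$ in class $A$, the formula forces $2\delta(f) = 0$, i.e.\ $\delta(f) = 0$, so $f$ is an embedding.

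The main subtlety is in part (2), where one must invoke the nontrivial adjunction equality (not just inequality) for simple $J$-holomorphic spheres — this relies on positivity of intersections for $J$-holomorphic curves and is one of the characteristic features of dimension four. Part (1) is routine once one recognizes that $C$ being symplectic is precisely what allows the fiberwise splitting $TM|_C = TC \oplus NC$ to be a direct sum of symplectic subbundles, which is exactly the datum needed to build a compatible $J_0$ preserving $TC$.
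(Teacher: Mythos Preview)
Your proposal is correct and follows exactly the approach the paper indicates: the paper's entire proof is the sentence ``The lemma is a consequence of the adjunction formula'' together with a reference to \cite[Lemma 5.3]{kk}, and your argument supplies precisely those details---constructing a compatible $J_0$ along $C$ via the symplectic splitting $TM|_C = TC \oplus NC$ and then extending, and invoking the adjunction equality $A\cdot A = -2 + c_1(TM)(A) + 2\delta(f)$ twice to force $\delta(f)=0$.
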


The lemma is a consequence of the adjunction formula. 
For details and references see, e.g., 
\cite[Lemma 5.3]{kk}.

\begin{Lemma} \labell{rock2}
Let $(M,\omega)$ be a compact symplectic four-manifold. 
Let $A \in H_2(M;\Z)$ be a homology class 
that is represented by an embedded symplectic sphere, and such that
$c_1(TM)(A) =1$. 
Let $J \in \image p_A$, and $(u,J) \in \M(A,J)$.

If $A=\sum_{i=1}^n{m_i [u_i]}$, 
where each component $u_i$ is 
a simple parametrized $J$-holomorphic sphere and $m_i \in \N$, then 
all the 
components  but one must be constants, and the nonconstant component
differs from $u$ by reparameterization of $\oCP$.
\end{Lemma}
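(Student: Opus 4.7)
The idea is to combine three standard ingredients from the theory of $J$-holomorphic curves in symplectic $4$-manifolds: the genus-zero adjunction formula, positivity of intersections for distinct simple $J$-holomorphic curves, and the fact that every non-constant $J$-holomorphic curve has strictly positive $\omega$-area. As a preliminary computation, since $A$ is represented by a smoothly embedded symplectic sphere and $c_1(TM)(A)=1$, the genus-zero adjunction formula (applied to an embedded sphere, whose normal bundle has degree $c_1(TM)(A)-\chi(S^2)=-1$) gives
\[
 A\cdot A \;=\; c_1(TM)(A) - 2 \;=\; -1.
\]

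First I would show that, after relabeling, $u_1$ must be a reparametrization of $u$. Suppose on the contrary that the image of $u$ is distinct from the image of every $u_i$. Then $u$ and each $u_i$ are simple $J$-holomorphic curves with distinct images, so by McDuff's positivity of intersections $[u]\cdot[u_i]\ge 0$ for every $i$, and hence
\[
 -1 \;=\; A\cdot A \;=\; [u]\cdot\sum_{i=1}^{n} m_i[u_i] \;=\; \sum_{i=1}^{n} m_i\,[u]\cdot[u_i] \;\ge\; 0,
\]
a contradiction. So some $u_i$, say $u_1$, differs from $u$ only by a reparametrization of $\CP^1$; in particular $[u_1]=A$.

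With $[u_1]=A$, I would rewrite $A=m_1 A+\sum_{i\ge 2} m_i[u_i]$ and pair with $\omega$ to obtain
\[
 (1-m_1)\,\omega(A) \;=\; \sum_{i\ge 2} m_i\,\omega([u_i]).
\]
Each $u_i$ is non-constant and $J$-holomorphic, so $\omega([u_i])>0$, and in particular $\omega(A)=\omega([u_1])>0$. Since every $m_i$ is a positive integer and the right-hand side is $\ge 0$, this forces $m_1\le 1$, hence $m_1=1$; the right-hand side then vanishes, so the index set $\{i\ge 2\}$ must be empty and $n=1$. The only non-trivial step is the invocation of positivity of intersections for distinct simple $J$-holomorphic curves in a $4$-manifold; everything else is elementary bookkeeping with $A\cdot A$, $c_1(TM)(A)$, and $\omega(A)$.
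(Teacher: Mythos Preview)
Your argument is correct and follows essentially the same route as the paper's proof: compute $A\cdot A=-1$ from the adjunction formula, then use positivity of intersections of distinct simple $J$-holomorphic spheres together with the positivity of $\omega$-area to force all but one component to be trivial. The only cosmetic difference is in organization: the paper first notes that if there were two or more nonconstant components then each would have strictly smaller area than $u$, hence be distinct from $u$, and then positivity of intersections yields the contradiction $A\cdot A\ge 0$; you instead first use positivity of intersections to pin down that one $u_i$ must equal $u$ up to reparametrization, and then use the $\omega$-pairing to kill the remaining terms. One small point: the lemma's conclusion is phrased so as to allow constant $u_i$'s, while you assert ``each $u_i$ is non-constant'' to get $\omega([u_i])>0$; this is harmless, since if constants are permitted your last displayed identity gives $\sum_{i\ge 2} m_i\,\omega([u_i])=0$ with each summand $\ge 0$, forcing every $u_i$ with $i\ge 2$ to be constant, exactly as stated.
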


\begin{proof}
By Lemma \ref{adjunction}, $u$ is an embedding, so the adjunction equality 
$$0=2+A \cdot A - c_1(TM)(A)$$ holds; since $c_1(TM)(A) =1$ this implies $A \cdot A = -1$.
If $n>1$ and there is more than one nonconstant component, 
then for $1 \leq i \leq n$, $\omega([u]) > \omega([u_i])$ so $u \neq u_i$, hence
by positivity of intersections of J-holomorphic spheres in a  four-manifold 
\cite[Theorem 2.6.3]{nsmall}, $[u_i] \cdot [u] \geq 0$.
Thus $0 \leq  \sum_{i=1}^n{m_i([u_i] \cdot [u])} =A \cdot A$, in contradiction to $A \cdot A =-1$.\\

Thus, all the components but one must be constants. By a similar argument, the nonconstant component 
differs from $u$ at most by reparameterization of $\oCP$.  
\end{proof}

\begin{Lemma} \labell{ue}
Let $(M,\omega)$ be a closed symplectic four-manifold. Let $E \in H_2(M;\Z)$
be a homology class that can be represented by an embedded symplectic sphere and such that 
$c_1(TM)(E) = 1$. 
Let $$U_E = \image p_E.$$ 
Then 
\begin{enumerate}
\item
$U_E \subset \J$ is open, dense, and path connected. Between any two elements in $U_E$ there is a path in $U_E$ that is transversal to $p_E$.
\item
The map
$$ \tilde{p_E} \colon \M(E,\J)/ \PSL(2,\C) \to U_E $$
that is induced from the projection map $p_E$ is proper.
\item 
For $J_0, J_1 \in U_E$, the sets $\M(E,J_0) / \PSL(2,\C)$ and $\M(E,J_1) / \PSL(2,\C)$
consist each of a single point, and there exists a path $\{J_t\}_{0 \leq t \leq 1}$ such that
$$\W(E;\{J_{t}\})=\{(u_{t},J_t) \ \mid \ u_t \in \M(E,J_{t}) \} / \PSL(2,\C)$$
is a compact one-dimensional manifold, and each $u_t$ is an embedding.
\end{enumerate}
\end{Lemma}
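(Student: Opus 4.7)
The plan is to handle the three parts in order. For part (1), since $U_E$ equals $\image p_E$, its openness is immediate from Lemma \ref{repdense}(1). For density, set $K = \omega(E)$; Lemma \ref{repdense}(2) yields $U_K \subset U_E$, and $U_K$ is dense in $\J$ by the cited results of \cite{McD:topology} and \cite{McDuff-Structure}. To establish path-connectedness and the transversality refinement, I would perturb any $J_0,J_1 \in U_E$ slightly into $U_K$ using openness, connect them by a path in the path-connected set $U_K \subset U_E$, and then appeal to Sard--Smale: viewing $p_E \colon \M(E,\J) \to \J$ as a Fredholm projection between Banach manifolds whose virtual index equals $2c_1(TM)(E) - 2 = 0$ after quotienting by $\PSL(2,\C)$, a $C^\infty$-generic path joining $J_0$ to $J_1$ is transversal to $p_E$.

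For part (2), let $C \subset U_E$ be compact and take any sequence $\{[(u_n,J_n)]\}$ in $\tilde{p_E}\Inv(C)$. Passing to a subsequence, $J_n \to J_\infty \in C \subset U_E$. Since $\omega([u_n]) = \omega(E)$ is constant, Lemma \ref{lem:gromov} offers two alternatives: either a reparametrized subsequence $u_n \circ A_n$ converges in $C^\infty$ to a $J_\infty$-holomorphic sphere $u$ with $[u]=E$, or else $E = \sum_{\ell=1}^L m_\ell [u_\ell]$ for $L \geq 2$ non-constant simple $J_\infty$-holomorphic spheres $u_\ell$. Since $J_\infty \in U_E$, there exists a simple $J_\infty$-holomorphic sphere representing $E$, and Lemma \ref{rock2} then forces all but one of the $u_\ell$'s to be constant, contradicting $L \geq 2$. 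Therefore the first alternative must hold, giving convergence in $\M(E,\J)/\PSL(2,\C)$ and proving properness of $\tilde{p_E}$.

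For part (3), uniqueness at each $J \in U_E$ is a positivity-of-intersections argument: the adjunction equality for the embedded symplectic sphere representing $E$ yields $E \cdot E = c_1(TM)(E) - 2 = -1$, so two distinct (i.e.\ non-reparametrization-equivalent) simple $J$-holomorphic representatives of $E$ cannot coexist, for each would be an embedding by Lemma \ref{adjunction}(2) and their intersection number would be both $\geq 0$ (by positivity) and equal to $-1$. Existence at each $J \in U_E$ follows from $J \in \image p_E$. Finally, pick a transversal path $\{J_t\}_{0 \leq t \leq 1}$ in $U_E$ joining $J_0$ and $J_1$, as provided by part (1); by the implicit function theorem for Fredholm maps, transversality makes $\W(E;\{J_t\})$ a smooth one-dimensional manifold, which is compact by the properness established in part (2), and each $u_t$ is an embedding by Lemma \ref{adjunction}(2).

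The main obstacle I expect is the careful application of Gromov's compactness in part (2), particularly verifying that $J_\infty \in U_E$ really does force the putative cusp decomposition to collapse to a single non-constant component via Lemma \ref{rock2}; this is the step where the hypothesis $c_1(TM)(E) = 1$, the existence of an embedded symplectic representative, and the structure of the image of $p_E$ must all be brought to bear simultaneously.
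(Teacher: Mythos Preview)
Your outline follows the paper's proof essentially step for step, and the overall strategy is correct. Two small lacunae are worth tightening.

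First, in part~(1), appealing to Sard--Smale for a generic transversal path with \emph{fixed} endpoints $J_0,J_1$ presupposes that $J_0$ and $J_1$ are themselves regular values of $p_E$; otherwise the standard perturbation-of-paths theorem (e.g.\ \cite[Theorem 3.1.7(ii)]{nsmall}) does not apply. The paper handles this by invoking the Hofer--Lizan--Sikorav automatic regularity criterion \cite{hls}: since $c_1(TM)(E)=1>0$ and $\dim M=4$, every $J\in U_E$ is already a regular value of $p_E$, so no genericity at the endpoints is needed.

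Second, in part~(2), when alternative~(1) of Lemma~\ref{lem:gromov} holds, the limiting sphere $u$ is $J_\infty$-holomorphic in the class $E$ but is not asserted to be simple; you must still rule out $u$ being a multiple cover before concluding $(u,J_\infty)\in\M(E,\J)$. The paper does this by one more application of Lemma~\ref{rock2} (writing $E=d[v]$ for the underlying simple curve $v$ forces $d=1$). Finally, in part~(3), to pass from the $7$-dimensional parametrized space $\W^*(E;\{J_t\})$ to the $1$-dimensional quotient $\W(E;\{J_t\})$ you need the $\PSL(2,\C)$-action to be free and proper, which is Lemma~\ref{frpr}; you implicitly use this but do not cite it. With these three additions your argument coincides with the paper's.
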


\begin{proof}
\begin{enumerate}
\item Since $p_E$ is an open map (part (1) of Lemma \ref{repdense}), 
its image $U_E$ is an open set in $\J$. 
Set $K=\omega(E)$.
By part (2) of Lemma \ref{repdense}, $U_K \subseteq U_E$. 
Since $U_K$ is dense in $\J$, so is $U_E$. 
Since $U_E$ is open, $\J$ locally path connected, and $U_K$ is dense in $U_E$ and path connected, 
we get that $U_E$ is path connected.

By the regularity criterion of Hofer-Lizan-Sikorav \cite{hls}, any element in $U_E$ is a regular value for $p_E$. A path between regular values for $p_E$ can be perturbed to a path with the same endpoints that is transversal to $p_E$; see \cite[Theorem 3.1.7(ii)]{nsmall}, see also \cite[Lemma A.9(d)]{finite}.

\item
 Part (2) follows from Gromov's compactness in the following way.
      Let $D \subset U_E$ be a compact subset. 
We need to show that $p_E\Inv(D)/\PSL(2,\C)$ is compact.
Because $\M(E,\J)$ is Hausdorff and first countable, \comment{do we need to justify this?}
it is enough to show that for every sequence
$\{ (f_n,J_n) \}$ in $p_E\Inv(D)$ there exists a subsequence
that, after reparametrization, has a limit in $p_E \Inv(D)$
in the $C^\infty$ topology.

Take such a sequence, $\{ (f_n,J_n) \} $.
Because $J_n \in D$ and $D$ is compact and contained in $U_E$, 
after passing to a subsequence we may assume that $\{J_n\}$ 
converges to $J_\infty \in U_E$.
Each $f_n$ is a $J_n$-holomorphic sphere in the class $E$.
Suppose that there exists a subsequence that, after reparametrization,
converges to some $u \colon \CP^1 \to M$ in the $\Cinf$ topology.
Then $u$ must be in the class $E$ and it must be $J_\infty$-holomorphic.
If $u$ is not simple, we get a contradiction to Lemma \ref{rock2}.
Then the pair $(u,J_\infty)$ is in the moduli space $\M(E,\J)$,
and since $J_\infty \in D$, this pair is in $p_E\Inv(D)$.

Now suppose that there does not exist such a subsequence.
By Lemma \ref{lem:gromov}, there exist two or more non-constant 
simple $J_\infty$-holomorphic spheres $u_\ell \colon \CP^1 \to M$
and positive integers $m_\ell$ such that $\sum m_\ell [u_\ell] = E$.
This contradicts Lemma \ref{rock2}.

 \item For $J \in U_E = \image p_E$, the set $\M(E,J)=p_E^{-1}(J) \neq \emptyset$.
Hence, by Lemma \ref{rock2}, the set $\M(E,J) / \PSL(2,\C)$ consists of a single point.
For $J_0, J_1 \in U_E$, by part (1), there is a path $\{J_t\}$ in $U_E$ from $J_0$ to $J_1$, that is
transversal to $p_E$,
hence, by  \cite[Theorem 3.1.7]{nsmall}, $\W^{*}(E;\{J_{t}\}) =\{(u_{t},J_t) \ \mid \ u_t \in \M(E,J_{t})\}$ is a manifold
of dimension $1+6=1+\text{index}p_E$.
By Lemma \ref{frpr}, the action of $\PSL(2,\C)$ on $\W^{*}(E;\{J_{t}\})$ is free and proper,
thus 
$$\W(E;\{J_{t}\})=\{(u_{t},J_t) \ \mid \ u_t \in \M(E,J_{t})\} / \PSL(2,\C) $$ is a manifold of dimension one. $\W(E;\{J_{t}\})$ is the inverse image of the path $\{J_t\}$
under the map $\tilde{p_E}$, hence, by part (2), it 
is compact.

By Lemma \ref{adjunction}, 
each $u_t$ is an embedding.
\end{enumerate}
\end{proof}

%%%%%%%%%%%%%%%%%%%%%%%%%%%%%%%%%%%%%%%%%%%%%%%%%%%%%%%%%%%%%%%%%%%%%%%%%%%%%%%%%%%%%%%
\section{Representing $J_T$-holomorphic curves on the moment map polygon}\label{push}
%%%%%%%%%%%%%%%%%%%%%%%%%%%%%%%%%%%%%%%%%%%%%%%%%%%%%%%%%%%%%%%%%%%%%%%%%%%%%%%%%%%%%%%

{\bf Notation:}
For $(M,\omega,\Phi)$, let $J_T$ denote  a $\T^n$-invariant complex structure
on $M$ that is compatible with $\omega$. By Delzant's construction \cite{delzant}, such a structure exists. %%%%give reference

\begin{Claim} \label{JI}
Let $(M,\omega,\Phi)$ be a four-dimensional symplectic toric manifold, with moment-map polygon 
$\Delta$. 
The preimage under $\Phi$ of an edge $d$ of $\Delta$ is an embedded  
$J_T$-holomorphic sphere.
\end{Claim}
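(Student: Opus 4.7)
The plan is to identify $\Phi\Inv(d)$ as (a connected component of) the fixed-point set of a circle subgroup of $\T^2$, and then use the $\T^2$-invariance of $J_T$ to conclude that this submanifold is automatically $J_T$-invariant, hence a $J_T$-holomorphic curve.

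First I would attach a circle to the edge. Because $\Delta$ is Delzant, the outward normal to $d$ is a primitive vector $\nu\in\Z^2$; let $S^1_\nu\subset \T^2$ be the circle subgroup with Lie algebra $\R\nu$. Using the equivariant local model from \ref{calJ}/Delzant's construction around a point in $\Phi\Inv(d^\circ)$ (the interior of the edge) — namely an equivariant symplectomorphism from a $\T^2$-invariant neighbourhood of $\{z_2=0\}\subset\C^2$, after possibly changing basis of $\T^2$ — one sees that $S^1_\nu$ fixes $\Phi\Inv(d^\circ)$ pointwise, so by continuity it fixes all of $\Phi\Inv(d)$. Let $F$ be the fixed-point set of $S^1_\nu$ and $F_d$ its connected component containing $\Phi\Inv(d)$. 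The same local model shows that $\Phi\Inv(d)$ is open in $F_d$; it is also closed as $\Phi\Inv$ of a closed set, hence $\Phi\Inv(d)=F_d$.

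Next I would use invariance of $J_T$. Since $J_T$ is $\T^2$-invariant, the action of $S^1_\nu$ on $TM$ commutes with $J_T$. At any $p\in F_d$, the linear operator $J_T(p)$ therefore preserves the $S^1_\nu$-fixed subspace of $T_pM$, which is exactly $T_pF_d$. Thus $F_d=\Phi\Inv(d)$ is a $J_T$-invariant real submanifold; since it is $2$-dimensional, the restriction of $J_T$ is automatically an integrable complex structure on it, and the inclusion into $(M,J_T)$ is a pseudoholomorphic embedding. Combining with Lemma \ref{perimeter and area}(\ref{i2}), which already asserts that $\Phi\Inv(d)$ is a symplectically embedded $2$-sphere of area $2\pi|d|$, yields the claim.

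The main obstacle is the step identifying $\Phi\Inv(d)$ as precisely one component of the fixed set $M^{S^1_\nu}$ — in principle this set could contain isolated fixed points of larger tori or another component — but this is controlled by the equivariant Darboux-type local model near each vertex of $\Delta$, which splits the tangent space into two weighted complex lines whose weights are exactly the primitive edge normals. Once this local picture is in place, the rest of the argument is purely linear-algebraic: a torus-invariant compatible $J$ preserves every isotypic subbundle of the action, and in particular the fixed subbundle corresponding to $\Phi\Inv(d)$.
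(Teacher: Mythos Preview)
Your argument is correct and follows essentially the same approach as the paper: identify $\Phi\Inv(d)$ as a connected component of the fixed-point set of a circle subgroup, then use $\T^2$-invariance of $J_T$ to get $J_T$-invariance of its tangent bundle, and combine with Lemma~\ref{perimeter and area}(\ref{i2}) for the sphere statement. The paper's proof is terser --- it simply asserts that $Y$ is a connected component of a fixed point set of a holomorphic $S^1$-action without spelling out the local-model verification you provide --- but the logical skeleton is identical.
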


\begin{proof}
By part \ref{i2} of Lemma \ref{perimeter and area}, $Y=\Phi\Inv(d)$ is a symplectically embedded 2-sphere 
in $M$. 
%; we observe that $Y$ 
%is the level set of the minimum   
%of an $S^1$-moment map on $M$, obtained by composing $\Phi$ with the projection ${\i}^{*}$ 
%along the direction of $d$. Consequently, 
Being a connected component of a fixed point set of a holomorphic $S^1$-action, $TY = J_T TY$. 
As an almost complex manifold of real dimension two, $(Y,J_T|_{TY})$ is a complex manifold. 
Thus the embedded sphere $Y$ is 
an embedded holomorphic sphere in the complex manifold $(M,J_T)$.
\end{proof}

\begin{Lemma} \label{possum2}
Let $(M,\omega,\Phi)$ be a four-dimensional symplectic toric manifold, with moment-map polygon 
$\Delta$.
Then
\begin{itemize}
\item 
any $J_T$-holomorphic sphere 
is homologous in $H_2(M;\Z)$ to 
a linear combination with coefficients in $\N$ of inverse images under $\Phi$ of  
edges of $\Delta$;  \\
\item 
for any set $S$ of $n-2$ edges whose union is connected, 
any simple $J_T$-holomorphic sphere $C$ that is not the preimage of an edge of $\Delta$ is homologous 
to a linear combination with coefficients in $\N$ of
 preimages of edges of $\Delta$ whose union is connected and that are contained in $S$; if 
the intersection of $C$ with each of the two edges of $\Delta$ that are not in $S$ is positive, 
then 
all the $n-2$ edges of $S$ appear with positive coefficients in this linear combination. 
\end{itemize}
\end{Lemma}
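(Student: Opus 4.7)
The plan is to establish the second bullet of part 2 via positivity of intersections of $J$-holomorphic curves, and then to derive the first bullet and part 1 as consequences.

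Let $C$ be a simple $J_T$-holomorphic sphere with $C \neq E_i := \Phi^{-1}(e_i)$ for every edge $e_i$. By Lemma \ref{adjunction}(2), $C$ is embedded, and by Claim \ref{JI}, each $E_i$ is an embedded $J_T$-holomorphic sphere. Since $C$ and the $E_i$'s are distinct embedded $J$-holomorphic curves in the symplectic four-manifold $M$, positivity of intersection yields $[C]\cdot[E_i]\ge 0$ for every $i$. After cyclic relabeling I may take $S=\{e_1,\ldots,e_{n-2}\}$, so the missing edges are $e_{n-1},e_n$, with $e_n$ adjacent to $e_1\in S$ and $e_{n-1}$ adjacent to $e_{n-2}\in S$. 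By Lemma \ref{possum}, $\{[E_k]\}_{k=1}^{n-2}$ is a $\Z$-basis of $H_2(M;\Z)$, so $[C]=\sum_{k=1}^{n-2}a_k[E_k]$ uniquely. Since $e_n$ is adjacent to exactly one edge of $S$, we have $[E_n]\cdot[E_k]=\delta_{k,1}$ for $k\in S$, whence $[C]\cdot[E_n]=a_1$; symmetrically $[C]\cdot[E_{n-1}]=a_{n-2}$. Thus the two boundary coefficients of $S$ are non-negative, strictly positive precisely under the hypothesis of the second bullet.

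For the interior coefficients $a_k$ with $2\le k\le n-3$, the plan is to exhibit, for each such $k$, an effective class $D_k=\sum_i c_i^{(k)}[E_i]$ with $c_i^{(k)}\in\Z_{\ge 0}$ such that $D_k\cdot[E_\ell]=\delta_{k\ell}$ for $\ell\in S$; then $a_k=[C]\cdot D_k=\sum_i c_i^{(k)}([C]\cdot[E_i])\ge 0$, and strict positivity at the boundary propagates across $S$. I would construct these dual classes by induction on $n$ using the corner-chopping decomposition of Lemma \ref{fulton}: given the $D_k'$ for the polygon $\Delta'$ with one fewer edge, one adjusts them by the new-edge class arising from the chopping to obtain the $D_k$ for $\Delta$. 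The connectedness of the support $T=\{k\in S:a_k>0\}$ follows from the adjunction identity $c_1(TM)([C])=[C]^2+2$ for the embedded sphere $C$, using $c_1(TM)=\sum_i[E_i]$ on the toric surface: were $T$ disconnected, the resulting decomposition $[C]=[C_1]+[C_2]$ with $[C_1]\cdot[C_2]=0$ would produce a numerical discrepancy with adjunction, contradicting embeddedness.

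Part 1 follows by casework: if $C=E_i$, the statement is immediate; if $C$ is a non-trivial multiple cover of a simple $J_T$-holomorphic sphere $u$, then $[C]$ is a positive integer multiple of $[u]$ and we reduce to $u$; otherwise part 2 applies directly. The first bullet of part 2 is the second specialised to the support $T\subseteq S$. \textbf{The main technical obstacle} is the inductive construction of the effective dual classes $D_k$: exhibiting an explicit $\Z_{\ge 0}$-combination of toric boundary classes that realises the correct Kronecker delta intersections against the $S$-basis requires careful combinatorial bookkeeping of how each corner chopping modifies the intersection pairing restricted to $S$.
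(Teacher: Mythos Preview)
Your approach diverges from the paper's. The paper projects $\Phi$ along a generic rational direction to an $S^1$-moment map, takes $v_{\min}$ to be the vertex of minimal value, and expresses $[C]$ in the basis of preimages of the $n-2$ edges not touching $v_{\min}$. The core step is an adaptation of Lemma~C.6 of \cite{karshon}, which gives the monotonicity $a_{i+1}/k_{i+1} \geq a_i/k_i \geq 0$ along each of the two edge-chains from $v_{\min}$ to $v_{\max}$ (with $k_i$ the order of the stabilizer). This single inequality delivers non-negativity of every coefficient and connectedness of the support in one stroke---once a coefficient is positive, all subsequent ones along the chain are---and the second bullet follows simply by choosing the projection so that $v_{\min}$ is the vertex between the two edges outside~$S$.

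Your route has a real gap precisely where you flag it. For the interior coefficients you need, for each $k$, an effective class $D_k=\sum_i c_i^{(k)}[E_i]$ with $c_i^{(k)}\geq 0$ and $D_k\cdot[E_\ell]=\delta_{k\ell}$ for $\ell\in S$; the inductive construction via corner chopping is only asserted, and it is not evident that effectivity survives the inductive step. Without the $D_k$ in hand, positivity of intersections controls only the two endpoint coefficients $a_1=[C]\cdot[E_n]$ and $a_{n-2}=[C]\cdot[E_{n-1}]$, not the interior ones, and likewise the ``propagation of strict positivity'' is unsupported. Your connectedness argument is also incomplete: invoking Lemma~\ref{adjunction}(2) to conclude that $C$ is embedded is circular, since that lemma presupposes that $[C]$ is already represented by an embedded symplectic sphere. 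For a merely simple $J_T$-sphere one has only the adjunction inequality $c_1([C])\leq [C]^2+2$, and you do not explain why a splitting $[C]=[C_1]+[C_2]$ with $[C_1]\cdot[C_2]=0$ into nonzero effective summands must violate it; when a summand is supported on an edge of positive self-intersection the naive count need not produce a contradiction. The paper's monotonicity argument sidesteps both difficulties simultaneously.
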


\begin{proof}
\begin{itemize}
\item
Let $\Psi$ be an $S^1$-moment map obtained by composing $\Phi$ with projection in a rational direction
along which there is not any edge of $\Delta$.
Denote by $v_{\min}$ ($v_{\max}$) the vertex of minimal (maximal) value of that projection. 
Let $D_1,\ldots,D_m$ be a chain of $\Phi$-preimages of edges  between $v_{\min}$ and $v_{\max}$. 
Let  ${D'}_1,\ldots,{D'}_{m'}$ be the other chain of $\Phi$-preimages of edges between $v_{\min}$ and $v_{\max}$.

Without loss of generality we assume that $C$ is a simple $J_T$-holomorphic sphere 
that is not the $\Phi$-preimage of an edge of $\Delta$. 
By Lemma \ref{possum}, in $H_2(M;\Z)$ 
\begin{equation} \label{sumeq}
[C]=\sum_{i=1}^{m}a_i D_i + \sum_{j=1}^{m'}b_j {D'}_j,\text{ with }a_1=b_1=0.
\end{equation}
Adapting the proof of Lemma C.6 in \cite{karshon} we get that
\begin{equation} \label{chain}
\text{for }1 \leq i<m \quad (1 \leq i <m'), \quad {a_{i+1} / k_{i+1}} \geq {a_i / k_i} \geq 0,
\end{equation}
where $k_i$ is the order of the stabilizer of the $i$-th sphere in a chain. %%%%

Notice that (\ref{chain}) implies that 
$$a_{\ell} >0 \Rightarrow a_i >0 \text{ for all }\ell \leq i \leq m,$$
and 
$$b_{\ell} >0 \Rightarrow b_j >0 \text{ for all }\ell \leq j \leq m'.$$ 
Hence, $C$ is homologous in $H_2(M;\Z)$ to 
a linear combination with coefficients in $\N$ of inverse images under $\Phi$ of,  
at most $n-2$, edges of $\Delta$ whose union is connected.  \\

\item It is enough to observe that for any set $S$ of $n-2$ edges whose union is connected, 
there is an $S^1$-moment map $\Psi$,  
obtained by composing $\Phi$ with projection in a rational direction 
along which there is not any edge of $\Delta$, such that the vertex $v_{\min}$ is 
the vertex  between the two edges of $\Delta$ that are not in $S$.
Then the previous proof gives the required.

\end{itemize}
\end{proof}

\begin{Lemma} \label{push11}
Let $(M,\omega,\Phi)$ be a four-dimensional symplectic toric manifold with moment-map 
polygon $\Delta$. Let $J_T$ be a $\T^2$-invariant $\omega$-compatible complex structure on $M$, and $g_T$ be the  
Riemannian metric defined by $(\omega,J_T)$. %%%%spell
Let ${\i}^*$  be a projection in a rational direction
along which there is not any edge of $\Delta$.
Let $v_{\min}$ be the vertex of $\Delta$ of minimal value of that projection.

Let $C$ be a $J_T$-holomorphic sphere such that 
$\Phi(C)$ avoids the vertex $v_{\min}$.
Let $\alpha$ and $\beta$ be the points 
of $\Phi(C)$ on the boundary of $\Delta$, that are closest to $v_{\min}$ from left and right. 
Let $v_{\alpha}$ and $v_{\beta}$ be the vertices following $\alpha$ and $\beta$.
Then the gradient flow of $\Psi={\i}^{*}\circ{\Phi} $ with respect to $g_T$ carries $C$ to a family of $J_T$-holomorphic spheres; 
this family weakly converges 
to a connected union of preimages of edges of $\Delta$ (maybe with multiplicities). 
These edges form a chain that we denote $L_C$. The vertices of $L_C$ closest to $v_{\min}$ from left and right are 
$v_\alpha$ and $v_\beta$.
\end{Lemma}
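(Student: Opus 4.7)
The plan is to realize $\phi_t$ as a holomorphic one-parameter family of biholomorphisms, apply Gromov's compactness to the family $\phi_t(C)$ of $J_T$-holomorphic spheres, and identify the weak limit as a chain of edge preimages using the interaction of the flow with the combinatorics of $\Delta$.

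First I would note that $(M,\omega,J_T)$ is K\"ahler with metric $g_T$, and the Hamiltonian vector field $\xi := \xi_\i$ generating the $S^1 \subset \T^2$ associated to $\i \in \t^2$ is Killing and $J_T$-preserving. The compatibility identity $\omega(u,v) = g_T(J_T u,v)$ gives $\nabla\Psi = -J_T\xi$, so $\nabla\Psi$ is holomorphic and $\phi_t$ is a biholomorphism of $(M,J_T)$ for each $t \in \R$. Hence $\phi_t(C)$ is a $J_T$-holomorphic sphere in the class $[C]$ with constant symplectic area $\omega([C])$.

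For $t_n \to +\infty$, Lemma~\ref{lem:gromov} extracts a subsequence along which $\phi_{t_n}(C)$ weakly converges to a cusp curve $C_\infty = \sum_\ell m_\ell u_\ell$ in the class $[C]$. I would show that each simple component $u_\ell$ equals $\Phi^{-1}(d_\ell)$ for some edge $d_\ell$ of $\Delta$. First, $C_\infty$ is $\C^*$-invariant: since $\phi_s(\phi_{t_n}(C)) = \phi_{t_n+s}(C)$ and the one-parameter limit on the moduli of algebraic cycles of $(M,J_T)$ is unique, $\phi_s(C_\infty) = C_\infty$ for all $s$. Second, $v_{\min} \notin \Phi(C_\infty)$: the hypothesis $v_{\min}\notin\Phi(C)$ combined with $v_{\min}$ being a source of $\phi_t$ (so $\phi_t(C)$ stays uniformly away from it) prevents any bubble from forming at $v_{\min}$. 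Now every $\C^*$-invariant irreducible $J_T$-holomorphic sphere in $M$ is either a preimage of an edge of $\Delta$ or the closure of a generic $\C^*$-orbit crossing the interior of $\Delta$; every orbit closure of the second type passes through $v_{\min}$ (the source of the top $\C^*$-cell), so is excluded, leaving $u_\ell = \Phi^{-1}(d_\ell)$.

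Finally, to identify the chain $L_C = \{d_\ell\}$ and its endpoints: by Lemma~\ref{possum2}, the identity $[C] = \sum_\ell m_\ell [\Phi^{-1}(d_\ell)]$ forces the $d_\ell$'s to have connected union, forming a chain. Along either boundary path of $\partial\Delta$ from $v_{\min}$ to $v_{\max}$, $\Psi$ is monotonically increasing (by convexity of $\Delta$ together with uniqueness of the extrema of the linear $\Psi$ at vertices), so $\alpha$ lies on an edge whose higher-$\Psi$ endpoint is $v_\alpha$; $\phi_t$ restricted to that edge's preimage sphere pushes $\alpha$ toward the fixed point over $v_\alpha$, forcing $v_\alpha$ to be a vertex of $L_C$, and analogously $v_\beta$. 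Since $\Phi(C)\cap\partial\Delta$ contains no point strictly between $v_{\min}$ and $\alpha$ (resp.\ $\beta$), no mass accumulates on the edges between $v_{\min}$ and $v_\alpha$ (resp.\ $v_\beta$), so $L_C$ has endpoints precisely $v_\alpha$ and $v_\beta$.

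The main obstacle is the third paragraph: controlling the weak Gromov limit $C_\infty$ enough to conclude that each component is an edge preimage rather than a generic $\C^*$-orbit closure. This combines the $\C^*$-invariance of the limit (via uniqueness of the one-parameter cycle limit) with the classification of $\C^*$-invariant irreducible $J_T$-holomorphic spheres, and uses the hypothesis $v_{\min}\notin\Phi(C)$ essentially to rule out interior-crossing orbit closures (all of which have $v_{\min}$ in their closure).
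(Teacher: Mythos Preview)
Your broad strategy coincides with the paper's: realize the gradient flow $\phi_t$ of $\Psi$ as a one-parameter group of biholomorphisms of $(M,J_T)$, apply Gromov compactness to $\phi_{t_n}(C)$, and identify the limit cusp curve as a chain of edge preimages. The substantive difference is in how you show that the components of the weak limit $C_\infty$ are preimages of edges. The paper uses a direct dynamical trapping argument: for a suitable region $B\supset C$ avoiding $\Phi^{-1}(v_{\min})$ it proves $\bigcap_{t>0}\phi_t(B)=\Phi^{-1}(L)$, where $L$ is the chain of all edges not touching $v_{\min}$, by observing that every point off $\Phi^{-1}(L)$ flows backward to $v_{\min}$ and hence eventually leaves $B$ under $\phi_{-t}$. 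Thus any subsequential Gromov limit lies in $\Phi^{-1}(L)$, and its irreducible components are forced to be edge preimages; no uniqueness or invariance of the limit is ever invoked.

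Your route via $\C^*$-invariance of $C_\infty$ plus a classification of $\C^*$-invariant irreducible curves is correct in principle. The classification holds because the big Bia{\l}ynicki--Birula cell for this $\C^*$ is $T_\C$-invariant and hence contains the entire open torus orbit, so every $\C^*$-orbit closure through an interior point of $\Delta$ meets $\Phi^{-1}(v_{\min})$. But the invariance step genuinely requires \emph{uniqueness} of $\lim_{t\to\infty}\phi_t(C)$, which Gromov compactness alone does not give: you are implicitly using that $(M,J_T)$ is projective and that a $\C^*$-orbit in its Chow variety (or Hilbert scheme) has a unique limit at infinity, which is then a $\C^*$-fixed cycle. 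That is true, but it should be stated explicitly; the paper's trapping argument is more elementary precisely because it works for any subsequential limit without importing this algebraic machinery.

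Two smaller points. Connectedness of $L_C$ does not follow from Lemma~\ref{possum2}, which only produces \emph{some} connected representative of $[C]$ rather than showing your particular $C_\infty$ is connected; connectedness is immediate because Gromov limits of connected spheres are by definition connected cusp curves. For the endpoints, the paper rules out vertices of $L_C$ strictly between $v_{\min}$ and $v_\alpha$ by an intersection-number argument: if $L_C$ reached such a vertex $v$, then $[C_\infty]\cdot[\Phi^{-1}(e_v)]>0$ for the edge $e_v$ just below $v$, whereas $C$ is disjoint from $\Phi^{-1}(e_v)$ (since $\alpha$ is the closest boundary point of $\Phi(C)$ to $v_{\min}$), so $[C]\cdot[\Phi^{-1}(e_v)]=0$, contradicting $[C_\infty]=[C]$. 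This is sharper than your ``no mass accumulates'' phrasing.
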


\begin{proof} 
The function $\Psi={\i}^{*}\circ{\Phi} \colon M \to \R$ is a moment map associated with   
a Hamiltonian action on $(M,\omega)$ of 
$S^1$ embedded in $\T^2$ by $\i \colon S^1 \hookrightarrow \T^2.$ 
 
Let $\xi_M$ be the vector field generating the $S^1$-action.
The gradient flow $\eta_t$ of $\Psi$ with respect to the invariant metric $g_T$ is generated by $-J_T \xi_M$. 
This flow is equivariant with respect to the action, i.e., for each $t$, 
the diffeomorphism $\eta_t \colon M \rightarrow M$ 
is $\T^2$-equivariant.
Consequently, it sends a set that is a $\Phi$-preimage of a vertex or a $\Phi$-preimage of 
an edge to itself.

Set $L$ to be the chain of edges of $\Delta$ that do not touch $v_{\min}$.
Let $B=\{p \in M \colon i^{*} \circ \Phi(p) > r\}$ for some $i^{*}(v_{\min}) < r < \min \{ i^{*}(v'),i^{*}(v'') \}$, where $v'$ ($v''$) is the vertex following $v_{\min}$ immediately from the left (right). 
Then $\cap_{t > 0}(\eta_t(B)) \supseteq  \Phi^{-1}L$.
On the other hand, a point $p \in B$ that is not in $\Phi^{-1}(L)$, is sent to $v_{\min}$ by the gradient flow $\eta_t$ as $t \to -\infty$, i.e., for $t'$ big enough, $q=\eta_{-t'}(p)$ is not in $B$. Since $\eta_{t'}$ is a diffeomorphism, there cannot be $b \in B$ such that $\eta_{t'}(b)=\eta_{t'}(q)=\eta_{0}(p)=p$, in particular, $p$ is not in the intersection $\cap_{t > 0}(\eta_t(B))$. 
So  $$\cap_{t >0}(\eta_t(B)) = \Phi^{-1}(L).$$

We choose $B$ big enough such that, for some complex coordinates, the complexified toric action 
%%%%\comment{discuss this notion earlier} 
on $M-B$ is the standard action of the complex torus on an open subset of ${\C}^2$. 
In particular, for $t_1,t_2$ close to $0$, if $t_1 > t_2 >0$, $\eta_{t_1}(M-B) \supset \eta_{t_2}(M-B)$, hence $\eta_{t_1}(B) \subset \eta_{t_2}(B)$. Since $\eta_t$ is a flow, 
(i.e., a homomorphism from $(\R,+)$ to $(\Diff, \circ)$), 
this implies that for any $t_1 > t_2 > 0$, $\eta_{t_1}(B) \subset \eta_{t_2}(B)$, i.e., $\eta_t$ is monotonic on $B$.

Now, choose $B$ such that, in addition to the above, its image contains $\Phi(C)$. 
Consider a sequence $\{C_i\}$, where $C_i=\eta_i(C)$, with discrete $i \rightarrow \infty$. 
Each $C_i$ is a $J_T$-holomorphic sphere in the homology class $[C]$. 
By Gromov's compactness theorem, %(applied to $J_T$ as an almost complex structure), 
there is a subsequence $\{C_{\mu}\}$ that weakly converges to a $J_T$-holomorphic 
(maybe non-smooth) cusp curve $C'$ in $[C]$. %%%%shall we add it to the statement of the lemma?
In particular, each point in the limit $C'$ is the limit of a sequence of points in $\{C_{\mu}\}$, hence,
since $C_{\mu}=\eta_{\mu}(C) \subset \eta_{\mu}(B)$, and $\eta_t$ is monotonic on $B$, 
we get that $C' \subset  \cap_{\mu}(\eta_{\mu}(B)) \subset \Phi \Inv (L)$.
Thus, since
each edge preimage 
is an irreducible 
$J_T$-holomorphic sphere in the complex manifold $(M,J_T)$ (by Claim \ref{JI}), the irreducible components of $C'$ are preimages of edges in $L$. 
We conclude that the cusp curve $C'$ is a connected union of preimages of the edges of a subchain $L_C$ of $L$, 
with positive multiplicities.

Let $p_{\alpha}$ ($p_{\beta}$) be the preimage of $v_\alpha$ ($v_\beta$) in $M$.
The chain $L_C$ includes $v_\alpha$ and $v_\beta$, as the limits of $\eta_{\mu}(p_\alpha)$ and $\eta_{\mu}(p_\beta)$.
Assume a vertex $v$ on $L_C$ closer to $v_{\min}$ from the left than $v_{\alpha}$.
Let $e_v$ be the edge that touches $v$ from below. Then $L_C$ intersects $e_v$ at $v$, hence $\Phi^{-1}(L_C)$ intersects $\Phi^{-1}(e_v)$ at the point $\Phi^{-1}(v)$, maybe with multiplicities. However $C \cap \Phi^{-1}(e_v)= \emptyset$, in contradiction to $[\Phi^{-1}(L_C)]=[C]$. % and positivity of intersections. 
Similarly, the vertex on $L_C$ closest to $v_{\min}$ from right is $v_{\beta}$.
\end{proof}

\begin{Claim} \label{push2}
Let $(M,\omega,\Phi)$ be a four-dimensional symplectic toric manifold with moment-map 
polygon $\Delta$.

Every $J_T$-cusp curve 
$C$ is homologous in $H_2(M;\Z)$ to a linear combination with coefficients in $\N$ 
of preimages of edges of $\Delta$ 
whose union is connected. In particular, $C$ is homologous to a $\T^2$-invariant  $J_T$-cusp curve.
\end{Claim}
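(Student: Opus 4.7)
The plan is to generalize Lemma~\ref{push11} from $J_T$-holomorphic spheres to $J_T$-cusp curves and apply it to $C$. I would first write $C$ as a connected union of simple $J_T$-holomorphic spheres with positive integer multiplicities, $C = \bigcup_{\ell=1}^{L} m_\ell C_\ell$. For those $C_\ell$ which are preimages of edges of $\Delta$ the desired representation is immediate by Claim~\ref{JI}. For the remaining $C_\ell$, Lemma~\ref{possum2} already produces a connected chain $L_\ell \subset \partial\Delta$ and non-negative integers $n_{\ell,e}$ with $[C_\ell] = \sum_{e \in L_\ell} n_{\ell,e}[\Phi^{-1}(e)]$, so that summing over $\ell$ expresses $[C]$ as a linear combination with coefficients in $\N$ of edge preimages. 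What remains is to show that the set of edges appearing with positive coefficient has connected union in $\partial\Delta$.

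The key step I would carry out is to run the gradient flow of $\Psi = \i^* \circ \Phi$ with respect to $g_T$, as in the proof of Lemma~\ref{push11}, on the entire cusp curve simultaneously. The flow $\eta_t$ is generated by $-J_T \xi_M$, which is a holomorphic vector field on the complex manifold $(M,J_T)$: the Killing field $\xi_M$ preserves $J_T$ because the $\T^2$-action does, and multiplication by $J_T$ sends real holomorphic vector fields to real holomorphic vector fields. Consequently each $\eta_t$ is a biholomorphism of $(M,J_T)$, so $\eta_t(C)$ is a $J_T$-cusp curve with the same area and homology class as $C$. Choosing $\i^*$ and a vertex $v_{\min}$ of $\Delta$ so that $\Phi(C)$ avoids $v_{\min}$, Gromov's compactness theorem (Lemma~\ref{lem:gromov}), applied componentwise, extracts a weakly convergent subsequence of $\{\eta_i(C)\}$ whose limit $C'$ is a $J_T$-cusp curve with $[C']=[C]$. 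Adapting the final step of the proof of Lemma~\ref{push11}, one sees $C' \subseteq \Phi^{-1}(L)$ for the chain $L$ of edges not touching $v_{\min}$, and by Claim~\ref{JI} every irreducible component of $C'$ is the preimage of an edge in $L$; being a cusp curve, $C'$ is connected, so its edge set is a connected subchain. This $C'$ is the required $\T^2$-invariant $J_T$-cusp curve homologous to $C$.

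The main obstacle is the existence of a vertex $v_{\min}$ avoided by $\Phi(C)$. If $C$ consists entirely of edge preimages, then $C$ is already $\T^2$-invariant and its edge set is automatically connected, since two edge preimages in $M$ intersect if and only if their edges share a vertex in $\Delta$, and $C$ itself is connected. Otherwise, I would split $C = C^{\mathrm{inv}} \cup C^{\mathrm{res}}$ into its edge-preimage and non-edge-preimage parts, apply the flow argument to $C^{\mathrm{res}}$ after choosing a rational direction $\i^*$ for which some vertex is avoided by $\Phi(C^{\mathrm{res}})$, and concatenate the resulting connected chain with the edges of $C^{\mathrm{inv}}$ using the connectedness of $C$ in $M$ to bridge the two subchains at a shared fixed point.
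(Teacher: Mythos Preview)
Your flow argument in the second paragraph is essentially the paper's treatment of the easy case (every non-edge-preimage component of $C$ avoids $v_{\min}$), and is fine there. The genuine gap is in your third paragraph: you assume you can choose $\i^*$ so that some vertex is avoided by $\Phi(C^{\mathrm{res}})$, but this can fail. A single simple $J_T$-holomorphic sphere that is not an edge preimage may pass through every $\T^2$-fixed point; for instance, in $\CP^2$ with its standard toric structure the smooth conic $\{z_0 z_1 + z_1 z_2 + z_2 z_0 = 0\}$ contains all three fixed points $[1{:}0{:}0]$, $[0{:}1{:}0]$, $[0{:}0{:}1]$, so its moment map image contains every vertex of the triangle. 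Splitting off $C^{\mathrm{inv}}$ does nothing to help here. (Your concatenation step is also underspecified: $C^{\mathrm{res}}$ need not be connected, so the flow may produce several chains rather than one, and the points where $C^{\mathrm{res}}$ meets $C^{\mathrm{inv}}$ are generally \emph{not} fixed points, so there is no ``shared fixed point'' to bridge at.)

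The paper handles the bad case without any flow. Fix any $\i^*$ and its $v_{\min}$. If some non-edge-preimage component $D$ of $C$ does pass through $\Phi^{-1}(v_{\min})$, then by positivity of intersections $D$ meets the preimages of both edges adjacent to $v_{\min}$ with strictly positive intersection number. The second part of Lemma~\ref{possum2} then forces $[D]$ to be a combination, with \emph{strictly positive} coefficients, of the preimages of all $n-2$ edges not adjacent to $v_{\min}$. Every other component of $C$ contributes a non-negative combination of edge preimages (first part of Lemma~\ref{possum2}), so the total expression for $[C]$ has positive coefficient on every edge of that connected $(n-2)$-chain; any further nonzero contributions on the two remaining edges only enlarge this chain at its ends. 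Connectedness is thus automatic, with no need to locate an avoided vertex.
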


We already know that a $J_T$-cusp curve $C$ is homologous to a linear combination with coefficients in $\N$ 
of preimages of edges of $\Delta$, (by applying the first part of Lemma \ref{possum2} to the components of the cusp curve). However, the union of these edges might not be connected. The 'connected' part that we add here plays an important role in the proof of Theorem \ref{main1}.

\begin{proof}
Let ${\i}^{*}$ be a projection in a rational direction along which there is not any edge of $\Delta$. 
Let $v_{\min}$ be the vertex of $\Delta$ of minimal value of ${\i}^{*}$.
If for any component of $C$ that is not a $\Phi$-preimage of an edges of $\Delta$, 
the moment map image avoids a neighbourhood of $v_{\min}$, 
then the claim follows from Lemma \ref{push11} (and the fact that $C$ is connected). 
Otherwise, 
there is such a component $D$;
by positivity of intersections, the intersection number of $D$ with the preimage 
of each of the two edges adjacent to $v_{\min}$ is positive. 
Thus, by the second part of Lemma \ref{possum2}, 
$D$ is homologous to a linear combination with 
coefficients in $\N$ of $\Phi$-preimages of all the edges of $\Delta$ but the two 
adjacent to $v_{\min}$.
By the first part of Lemma \ref{possum2}, each component  of $C$ is homologous to a linear combination
of $\Phi$-preimages of edges of $\Delta$ 
with coefficients in $\N$. 
Combining such representatives of $D$ and the other components of $C$ gives the claim. 
\end{proof}

\begin{Lemma} \labell{sympr}
Let $(M,\omega,\Phi)$ be a symplectic toric four-manifold with moment map polygon $\Delta$. 
Let $C$ be an embedded symplectic sphere in $(M,\omega)$ with $c_1(TM)(C)=1$.

Then $C$ is homologous in $H_2(M;\Z)$ to a linear combination with coefficients in $\N$ 
of preimages of edges of $\Delta$ 
whose union is connected.
\end{Lemma}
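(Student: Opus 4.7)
The plan is to obtain the conclusion by combining two results already established earlier in the excerpt: the existence of a $J_T$-holomorphic cusp curve in the class $[C]$, and the combinatorial structure of such cusp curves.

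First, I would verify that the hypotheses of Lemma \ref{jexis} are met for the class $E := [C]$. The class $E$ is represented by the embedded symplectic sphere $C$ itself, and $c_1(TM)(E) = 1$ by assumption. Applying Lemma \ref{jexis} with the particular choice $J = J_T$ (the $\T^2$-invariant $\omega$-compatible almost complex structure coming from Delzant's construction), I obtain a $J_T$-holomorphic cusp curve $C'$ representing the class $[C]$.

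Next, I would apply Claim \ref{push2} to the cusp curve $C'$. This claim asserts that every $J_T$-cusp curve in $(M,\omega,\Phi)$ is homologous in $H_2(M;\Z)$ to a linear combination with coefficients in $\N$ of $\Phi$-preimages of edges of $\Delta$ whose union is connected. Since $[C'] = [C]$, the conclusion of the lemma follows immediately.

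I do not foresee a serious obstacle here: the hard work has been done in Lemma \ref{jexis} (Gromov compactness, through \cite{finite}) and in Claim \ref{push2} (which in turn relied on the gradient-flow argument of Lemma \ref{push11} and the chain/positivity analysis of Lemma \ref{possum2}). The only slight care needed is to note that a possibly reducible and non-smooth cusp curve is the correct substitute for $C$ once we pass to $J_T$: we cannot generally expect $C$ itself to be $J_T$-holomorphic, because $J_T$ need not lie in the open dense set $U_E \subset \J$ of Lemma \ref{ue}. This, however, is precisely the reason Claim \ref{push2} was formulated for arbitrary $J_T$-cusp curves rather than for smooth $J_T$-spheres, and it is exactly what allows the short deduction above to go through.
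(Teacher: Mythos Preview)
Your proposal is correct and follows exactly the paper's own proof: invoke Lemma \ref{jexis} at $J=J_T$ to produce a $J_T$-holomorphic cusp curve in the class $[C]$, then apply Claim \ref{push2}. Your additional remarks about why one must pass to a cusp curve are accurate and helpful, but the argument itself is identical to the paper's two-line proof.
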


\begin{proof}
By Lemma \ref{jexis} there exists a $J_T$-holomorphic cusp curve 
in the class $[C]$.
Now apply Claim \ref{push2}.
\end{proof}

%%%%%%%%%%%%%%%%%%%%%%%%%%%%%%%%%%%%%%%%%%%%%%%%%%%%%%%%%%%%%%%%%%%%%%%%%%%%%%%%%%%%%%%%%%
\section{No toric action on $(M_k, \omega_{\epsilon})$ for $k >3$ and small $\epsilon$}
%%%%%%%%%%%%%%%%%%%%%%%%%%%%%%%%%%%%%%%%%%%%%%%%%%%%%%%%%%%%%%%%%%%%%%%%%%%%%%%%%%%%%%%%%%
\labell{results}

For $\epsilon >0$, let
$$ (M_k,\omega_\epsilon) $$
denote a symplectic manifold that is obtained from $(\tCP,\omega_{\FS})$ 
by $k$ simultaneous 
symplectic blow ups
of equal sizes $\epsilon$. For description of symplectic blow up, see \S \ref{blowup}. 
$k$ \emph{simultaneous blow ups} are obtained from embeddings
$i_1 \colon \Omega_1 \to M$, $\ldots,$ $i_k \colon \Omega_k \to M$
whose images are disjoint.
we denote by $E_1,\ldots,E_k$ the homology classes in $H_{2}(M_k;\Z)$ 
of the exceptional divisors  obtained by the blow ups, 
and by $L$ the homology class of a line $\oCP \subset M_k$.

\begin{noTitle} \labell{bu}
By McDuff and Polterovitch, 
 for $k \leq 8$ there exists a symplectic blow up of $\CP^2$ $k$ times by size $\epsilon$ if and only if $\eps$ satisfies the following conditions. If $k=2,3,4$: $\eps < \half$. If $k=5,6$: $\eps < \frac{2}{5}$. If $k=7$: $\eps < \frac{3}{8}$. If $k=8$: $\eps < \frac{6}{17}$. See \cite{mystery}. According to Biran, for $k \geq 9$, there exist $k$ symplectic blow-ups of equal sizes $\eps$ if and only if $\eps$ satisfies the volume constraint, i.e., $\eps < \frac{1}{\sqrt{k}}$. See \cite{biran}. 
\end{noTitle}

Assume that $(M_k,{\omega}_{\epsilon})$ admits a toric action
with moment map polygon $\Delta$.
By Lemma \ref{sympr}, 
 each $E_i$ can be represented by a 
linear combination with coefficients in $\N$ of preimages 
of edges of $\Delta$. We call the union of these edges, with the $\N$-multiplicities, 
a \emph{$\Delta$-representative} of $E_i$. 
If this union is connected,
we call it a \emph{connected $\Delta$-representative}.
%There might be several such representations. 
We observe the following properties of $\Delta$-representatives of $E_1,\ldots,E_k$.

\begin{Claim} \label{pos}
Assume that $(M_k,{\omega}_{\epsilon})$ admits a toric action
with moment map image $\Delta$. 
Choose $\Delta$-representatives for $E_1,\ldots,E_k$.  %%%%L for any choice
For $m \leq k$, the number of edges in the union of the $\Delta$-representatives of
$m$ different $E_i$'s is $> m$, 
unless each of these $\Delta$-representatives is a single edge with multiplicity one.
\end{Claim}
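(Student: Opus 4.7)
The plan is to prove the contrapositive: if the union of the $\Delta$-representatives of $E_{i_1},\dots,E_{i_m}$ contains $m'\le m$ edges, then each $\Delta$-representative is a single edge with multiplicity one. Write
\[E_{i_l} = \sum_{l'=1}^{m'} a_{l,l'}[D_{j_{l'}}], \qquad a_{l,l'}\in\N,\]
where $D_{j_1},\dots,D_{j_{m'}}$ are the distinct edges in the union. Since $L, E_1,\dots,E_k$ is a $\Z$-basis of $H_2(M_k;\Z)$, the classes $E_{i_1},\dots,E_{i_m}$ are linearly independent; and since they lie in the rational span of the $m'$ edge classes, $m'\ge m$, hence $m' = m$.

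Next I would show the coefficient matrix $A=(a_{l,l'})$ is unimodular. Because $E_{i_1},\dots,E_{i_m}$ extends to a $\Z$-basis of $H_2(M_k;\Z)$, the sublattice $\Lambda := \Z\text{-span}(E_{i_1},\dots,E_{i_m})$ is a direct summand, hence saturated. Each edge class $[D_{j_{l'}}]\in H_2(M_k;\Z)$ lies in the rational span of the $E_{i_l}$'s by the first step, and saturation forces $[D_{j_{l'}}]\in\Lambda$. Thus $\{[D_{j_{l'}}]\}_{l'}$ and $\{E_{i_l}\}_l$ are two $\Z$-bases of $\Lambda$, and the change-of-basis matrix $A$ satisfies $\det A = \pm 1$, so $A^{-1}$ has integer entries.

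The decisive step will exploit the equal-area hypothesis. Let $\ell_{l'} > 0$ be the rational length of $D_{j_{l'}}$; by Lemma~\ref{perimeter and area}(\ref{i2}), $\omega([D_{j_{l'}}]) = 2\pi\ell_{l'}$, and since $E_{i_l}$ is a size-$\epsilon$ exceptional divisor, $\omega(E_{i_l}) = 2\pi\epsilon$. The area equations $\sum_{l'} a_{l,l'}\ell_{l'} = \epsilon$ read $A\vec{v} = \vec{1}$ with $\vec{v} := \vec{\ell}/\epsilon = A^{-1}\vec{1}$; by integrality of $A^{-1}$ and positivity of the $\ell_{l'}$, $\vec{v}$ has positive integer entries. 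The $l$-th equation $\sum_{l'} a_{l,l'}v_{l'}=1$ is then a sum of non-negative integers with each $v_{l'}\ge 1$: exactly one $a_{l,l'}=1$ (with $v_{l'}=1$) and the rest vanish. So every row of $A$ is a standard basis vector, $\det A=\pm 1$ forces distinct rows, and $A$ is a permutation matrix. Therefore each $E_{i_l}$ is represented by a single edge of rational length $\epsilon$ with multiplicity one.

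I expect the main hurdle to be articulating the saturation step precisely — one must notice that a subset of a $\Z$-basis generates a direct summand so that any integer class in its rational span automatically lies in it, which is what upgrades the obvious $\det A \ne 0$ to $\det A = \pm 1$. The final arithmetic step then depends on combining $\Z$-integrality of $A^{-1}$ with strict positivity of $\vec{v}$; either ingredient alone is insufficient to force $A$ to be a permutation.
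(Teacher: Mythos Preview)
Your proof is correct and follows essentially the same route as the paper: assume the union has at most $m$ edges, use independence of the $E_i$'s to force exactly $m$ edges and invertibility of $A$, argue that $A^{-1}$ has integer entries, and then use the equal-size hypothesis $\omega(E_i)=2\pi\epsilon$ to conclude each row of $A$ is a standard basis vector. The only cosmetic difference is that where you invoke the saturation/direct-summand property of $\Lambda=\Z\langle E_{i_1},\dots,E_{i_m}\rangle$ to get integrality of $A^{-1}$, the paper argues the same point by expanding each $[\Phi^{-1}C_j]$ in the full basis $L,E_1,\dots,E_k$ and comparing coefficients; these are the same observation in different language.
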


\begin{proof}
Assume that the union of the chosen $\Delta$-representatives of $E_1,\ldots,E_m$ (without loss of generality) is
a subset of 
the set of edges $C_1,\ldots,C_m$, i.e., 
in  $H_{2}(M_k;\Z)$, for $1 \leq i \leq m$,
 \begin{equation} \label{sum0}
 E_i= \sum_{j=1}^{m} a_{j}^{i}[{\Phi}^{-1}C_j], \quad a_j^i \in {0} \cup \N.
\end{equation}
Denote by $A$ the $m \times m $ matrix of the coefficients $a_j^i$. 
Since the homology classes $E_1,\ldots,E_m$ are independent, the matrix $A$ is invertible (over $\R$).  
We get that 
\begin{equation} \label{sum}
([{\Phi}^{-1}C_1], \ldots,[{\Phi}^{-1}C_m])^t= A^{-1}(E_1,\ldots,E_m)^t. 
\end{equation}
 Since the homology classes $L,E_1,\ldots,E_k$ form a basis of 
 $H_{2}(M_k;\Z)$, each $[\Phi^{-1}C_j]= d_j L + \sum_{i}{{b_{i}^{j}}{E_i}}$, with unique integers as coefficients. 
The coefficients do not change if we write $[\Phi^{-1}{C_j}]$ as a linear combination of $L,E_i$ 
in $H_{2}(M_k;\R)$.
By this and \eqref{sum}, 
all the entries of $A^{-1}$ are in $\Z$, so in $H_{2}(M_k;\Z)$,
\begin{equation} \label{sum2}
[\Phi^{-1}C_j]= \sum_{i=1}^{m}{{b_{i}^{j}}{E_i}}, \quad b_i^j \in \Z. 
\end{equation}   
Since the size of each $E_i$ is $\epsilon$ we deduce that the length $|C_j|$ of each 
$C_j$ is an integer multiple of $\epsilon$. 
Since $|C_j| > 0$, it must be a
multiple of $\epsilon$ by $N_j \in \N$. %, in particular, each $|C_j|$ is $\geq \epsilon$.
However, by \eqref{sum0}, for $1 \leq i \leq m$, 
\begin{equation} \label{sum3}
 \epsilon= \sum_{j=1}^{m} {a_{j}^{i}|C_j|}, \quad a_j^i \in {0} \cup \N. 
\end{equation}
Thus
\begin{equation} \label{sum4}
 \epsilon= \sum_{j=1}^{m} a_{j}^{i}N_j \epsilon, \quad a_j^i \in {0} \cup \N, \  N_j \in \N.
\end{equation}
We get that 
in each line (and each column) of (the invertible matrix) $A$ there is $1$ 
in one entry and $0$ in each of the other entries, i.e., 
each of the $\Delta$-representatives is a single edge with multiplicity one.
\end{proof}

\begin{Claim} \label{poscon}
Assume that $(M_k,{\omega}_{\epsilon})$ admits a toric action 
with moment map image $\Delta$. 
Choose connected $\Delta$-representatives for $E_1,\ldots,E_k$. %%%%L Y for any choice
Denote their union by $U$. 
If none of the chosen connected $\Delta$-representatives is a single edge of 
$\Delta$ with multiplicity one, then      $U$
is connected and consists of at least $k+1$ edges.
\end{Claim}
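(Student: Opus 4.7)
The plan is to derive both the edge count and the connectedness from Claim \ref{pos}, combined with the cyclic structure of the boundary of $\Delta$. First I would record that $b_2(M_k) = k+1$ (one class per blow up plus $L$), so by Corollary \ref{bett} the polygon $\Delta$ has exactly $k+3$ edges; this is the only ambient fact I need beyond the hypotheses.

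The lower bound $|U| \geq k+1$ is then immediate: apply Claim \ref{pos} with $m=k$ to all of $E_1,\ldots,E_k$. Since by hypothesis no chosen representative is a single edge with multiplicity one, the exceptional case of Claim \ref{pos} does not occur, so $|U| > k$.

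For connectedness, I would argue by contradiction. Suppose $U$ has $r \geq 2$ connected components $U^1,\ldots,U^r$ (as subsets of $\partial\Delta$). Because each chosen representative is itself connected, it must lie entirely inside some $U^j$; this gives a partition of $\{1,\ldots,k\}$ into sets $I_1,\ldots,I_r$ with $|I_1|+\cdots+|I_r|=k$. Now apply Claim \ref{pos} to each $I_j$ separately: none of those representatives is a single edge with multiplicity one, so $|U^j| \geq |I_j| + 1$. Summing over $j$ yields
\[
|U| \;=\; \sum_{j=1}^{r} |U^j| \;\geq\; \sum_{j=1}^{r}(|I_j|+1) \;=\; k+r.
\]

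To close the argument I would use the cyclic structure of $\partial\Delta$. If $U = \partial\Delta$, then $U$ is connected and $r=1$, a contradiction. Otherwise $\partial\Delta\setminus U$ is a nonempty subset of the cycle of $k+3$ edges; since arcs of $U$ and arcs of its complement alternate around the cycle, the complement has exactly $r$ connected components and hence at least $r$ edges. Therefore
\[
|U| \;\leq\; (k+3) - r.
\]
Combined with the previous inequality this gives $k+r \leq k+3-r$, i.e. $r \leq 3/2$, so $r \leq 1$, contradicting $r \geq 2$.

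There is no serious obstacle here once Claim \ref{pos} and Corollary \ref{bett} are in hand; the only thing that has to be noted carefully is the alternation of arcs on the cyclic boundary, which forces the complement to contribute at least as many edges as $U$ has components whenever $U$ is a proper subset of $\partial\Delta$.
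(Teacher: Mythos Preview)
Your proof is correct and follows essentially the same approach as the paper's: both arguments combine Claim~\ref{pos} (applied to all $k$ classes and then componentwise) with the cyclic structure of $\partial\Delta$ to force connectedness. Your organization is in fact a bit cleaner: you derive $|U|\geq k+r$ and $|U|\leq (k+3)-r$ directly and compare, whereas the paper first pins down $|U|=k+1$ and then obtains the contradiction from the componentwise bound $\sum_j(m_j-1)=k+1-c<k$; the underlying ingredients are identical.
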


\begin{proof}
By Claim \ref{pos}, 
$U$ consists of more than $k$ edges. 
Assume 
that $U$ is disconnected. Then it consists of at most $k+1$ edges, 
hence
it consists of exactly $k+1$ edges out of the $k+3$ edges of $\Delta$. 
    Since none of the $\Delta$-representatives is a single edge, Claim \ref{pos} implies that
 the $m_j$ edges of a connected component $j$ 
support at most $m_j-1$ of the $E_i$'s. 
Thus the non connected $k+1$ edges support at most $\sum_{j=1}^{c}{(m_j-1)} = k +1 - c < k$ 
of these classes, where $c>1$ is the number of connected components, and we get a contradiction.
\end{proof}

For a convex polygon $\Delta$ in $\R^2$, we denote by 
$$ (M_\Delta,\omega_\Delta,\Phi_\Delta) $$
a symplectic toric manifold 
whose moment map image is $\Delta$.

The main ingredient of the proof of Theorem \ref{main1} is:  
\begin{Claim} \label{fin}
If $(M_k,\omega_\epsilon)$ is symplectomorphic to $(M_\Delta,\omega_\Delta)$, 
and $$ \epsilon \leq \frac{1}{3k 2^{2k}},$$
then one of the classes $E_1,\ldots,E_k$  is realized by 
an embedded $\T^2$-invariant symplectic exceptional sphere,
 equivariantly 
blowing down along it yields
$(M_{k-1}, \omega_{\epsilon})$
with a toric action. 
\end{Claim}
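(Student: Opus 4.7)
The plan is to prove that at least one exceptional class $E_i$ admits a $\Delta$-representative consisting of a single edge of $\Delta$ with multiplicity one; once this is in hand, Claim~\ref{JI} and part~\ref{i2} of Lemma~\ref{perimeter and area} turn the corresponding edge preimage into an embedded $\T^2$-invariant symplectic exceptional sphere of class $E_i$, and equivariant blow-down combined with the appendix's uniqueness of symplectic blow-downs produces $(M_{k-1},\omega_\epsilon)$ with a toric action.

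I will argue by contradiction. Using Lemma~\ref{sympr}, choose a connected $\Delta$-representative of each $E_i$ and let $U$ be their union; assume none of the chosen representatives is a single edge with multiplicity one. Claim~\ref{poscon} then yields that $U$ is connected and contains at least $k+1$ of the $k+3$ edges of $\Delta$ (Corollary~\ref{bett}), so $\Delta\setminus U$ is a connected arc of at most two edges. Each $c\in U$ appears in some $E_i$'s representative with multiplicity $\ge 1$ in a sum of total $\omega$-length $\epsilon$, whence $|c|\le\epsilon$. Writing $[\Phi^{-1}(c)]=\alpha L+\sum\beta_i E_i$ in the standard basis of $H_2(M_k;\Z)$, the adjunction formula for the embedded symplectic sphere $\Phi^{-1}(c)$ reads $\sum\beta_i(\beta_i+1)=(\alpha-1)(\alpha-2)$, and the area gives $|c|=\alpha+\epsilon\sum\beta_i$. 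A short case analysis in $\alpha$, using the hypothesis $\epsilon\le 1/(3k 2^{2k})$ (which in particular implies $\epsilon<1/(k+1)$), forces $\alpha=0$ and $[\Phi^{-1}(c)]=E_{j(c)}$ with $|c|=\epsilon$: competing classes with $\alpha\ge 1$ have length $\ge 1-k\epsilon>\epsilon$, and those with $\alpha\le -1$ have negative length.

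Next I invoke Lemma~\ref{fulton} to write $\Delta$, up to $\AGL(2,\Z)$-congruence, as the result of $k-1$ corner choppings of a Hirzebruch trapezoid $\Delta_0=\Hirz_{a,b,k'}$. Each edge $d_0$ of $\Delta_0$ lifts to a chain of edges of $\Delta$, and by induction on the choppings (using the blow-up formula for proper transforms) the class in $H_2(M_\Delta;\Z)$ carried by this chain with the multiplicities of Lemma~\ref{bound} equals the pullback of $[\Phi_{\Delta_0}^{-1}(d_0)]$ under the blow-down $M_\Delta\to M_{\Delta_0}$; in particular its self-intersection and its $c_1$-pairing are preserved. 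The east and west edges of $\Delta_0$ are fibers, of class $F$ with $F^2=0$ and $c_1(F)=2$. Suppose a fiber chain lay entirely in $U$; then by the previous paragraph its class in $H_2(M_k;\Z)$ is $\sum m_i E_{j(i)}$ for positive integers $m_i$, and setting $T_j=\sum_{i:j(i)=j}m_i$ its self-intersection is $-\sum_j T_j^2\le 0$, which can equal $F^2=0$ only if every $T_j=0$, hence every $m_i=0$, contradicting $\sum m_i=c_1(F)=2$. So neither of the two fiber chains is contained in $U$. But the two fiber chains sit on opposite sides of the cyclic boundary of $\Delta$, separated by the north- and south-section chains, each of which contributes at least its non-trivial surviving-portion edge; a connected arc of at most two consecutive edges of $\partial\Delta$ therefore cannot meet both fiber chains at once. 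This contradicts the conclusion that $\Delta\setminus U$ touches each of them, and the argument closes: some $E_i$ does admit the desired single-edge representative.

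The main obstacle will be the second step, pinning down the homology class of a short edge via the adjunction formula: it is a finite but delicate case analysis in $\alpha$, and it is the only point where the size hypothesis enters (through the need to rule out classes $aL-\sum_{j\in S}E_j$ whose length $1-\epsilon|S|$ would otherwise approach $\epsilon$). The hypothesis $\epsilon\le 1/(3k2^{2k})$ is very conservative for this part, consistent with the tighter bound $\epsilon\le 1/3$ attainable by different methods.
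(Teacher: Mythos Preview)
Your argument is correct and takes a genuinely different route from the paper. The paper never identifies the homology class of an individual edge: after Claim~\ref{poscon} it observes that one of the two disjoint chains in $\Delta$ carrying the fiber class of the underlying Hirzebruch trapezoid must lie entirely in $U$, bounds its rational length by $2^k k\epsilon$ (each edge of the chain lies in some $E_i$-representative, hence has length $\le\epsilon$; multiplicities are $\le 2^k$ by Lemma~\ref{bound}), and then feeds this into the area and perimeter formulas of Lemma~\ref{perimeter and area} to obtain $1<3k\,2^{2k}\epsilon$. You instead use adjunction edge by edge. Two comments. First, your third paragraph is superfluous: once every $c\in U$ satisfies $[\Phi^{-1}(c)]=E_{j(c)}$, the assumed connected representative of each $E_i$ reads $E_i=\sum_c m_c\,E_{j(c)}$ with all $m_c\ge1$, and linear independence of the $E_j$ already forces a single edge with multiplicity one, contradicting the standing assumption --- no fiber-chain step is needed. (Equivalently: $U$ has $\ge k+1$ edges, each carrying some $E_j$, but by positivity of intersections distinct $J_T$-spheres in the same class $E_j$ would meet with number $E_j\cdot E_j=-1$, so the map $c\mapsto j(c)$ is injective; impossible with only $k$ targets.) Second, the ``short'' case analysis needs a bit more than you indicate: for $\alpha\in\{1,2\}$ adjunction forces every $\beta_i\in\{0,-1\}$ and $|c|\ge 1-k\epsilon$ is immediate, but for $\alpha\ge3$ and for $\alpha\le-1$ one must combine the adjunction relation with Cauchy--Schwarz, $(\sum\beta_i)^2\le k\sum\beta_i^2$, to control $\sum\beta_i$. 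This is routine, and in fact your argument succeeds under a hypothesis on $\epsilon$ of order $1/k$ rather than $1/(3k\,2^{2k})$; so your route buys a sharper bound at the price of the homology bookkeeping.
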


\begin{proof} 
%If $k=0$, there is nothing to prove.
If $k \geq 1$, the moment map image $\Delta$ is a Delzant polygon of $k+3 \geq 4$ edges, 
so by Lemma \ref{fulton}, up to $\AGL(2,\Z)$-congruence, 
it  is obtained by $(k-1)$ corner-choppings
of sizes $(\delta_1,\ldots,\delta_{k-1})$ from 
a standard  Hirzebruch trapezoid $\Sigma$  %%%%H_{a,b,d}
with west and east edges $F_w$, $F_e$, south edge $S$, north edge $N$, and slope  $-1/d$.

By part \eqref{j4} of Lemma \ref{bound}, 
\begin{equation} \label{boundperi}
|S|+|N| < 2^k \perimeter(\Delta), 
\end{equation}
and $F_w$ and $F_e$ are given by two disjoint connected unions of edges of $\Delta$ with multiplicities $\leq 2^k$.

For each class $E_i$, 
we choose a connected $\Delta$-representative, 
that is a connected union of edges (with multiplicities in $\N$) whose preimage is in $E_i$. 
Assume that none of these $\Delta$-representatives is a single edge of $\Delta$ with multiplicity one. 
By Claim \ref{poscon}, the union $U$ of these $\Delta$-representatives 
  is connected and consists of  
at least $k+1$ edges
of the $k+3$ edges of $\Delta$.
Then, (at least) one of the two chains of edges giving $F_w$ and $F_e$ as above
is contained in $U$: 
the 
connected at most two edges that are not in $U$ can overlap at most one chain giving $F_w$ or $F_e$, 
 since the two chains are separated at each end by an edge. 
Thus 
\begin{equation} \label{boundeps}
|F|=|F_w|=|F_e| \leq 2^k k \epsilon.
\end{equation}

Then
\begin{eqnarray*}
\half (1 - k{{\epsilon}^2}) =  \area (\Delta )
&=&  \half (|S|+|N|)|F|- \sum_{i=1}^{k-1}{\half{\delta_i}^2}\\
&\leq & \half 2^k  \perimeter (\Delta )|F| - \sum_{i=1}^{k-1}{\half{\delta_i}^2}\\
&=& \half 2^k (3-k \epsilon )|F| - \sum_{i=1}^{k-1}{\half{\delta_i}^2}\\
&\leq &  \half 2^k(3-k\epsilon )2^k k \epsilon - \sum_{i=1}^{k-1}{\half{\delta_i}^2}.
\end{eqnarray*}

The first (in)equality is by part (\ref{i4}) of Lemma \ref{perimeter and area}, 
the second is by part \eqref{j1} of Lemma \ref{bound}, 
the third  inequality is by equation (\ref{boundperi}), the fourth follows from part (\ref{i3}) of Lemma 
\ref{perimeter and area} and the fact that the Poincare dual to ${c_1}(T{M_k})$ equals $3L -\sum_{i=1}^k{E_i}$, and the last is by equation (\ref{boundeps}).

We get that
\begin{eqnarray*}
1 - k{{\epsilon}^2}  
&\leq & 2^{2k}(3  - k \epsilon) (k \epsilon)\\
&\leq & 2^{2k} (3 k \epsilon - k {\epsilon}^2). %%%%<
\end{eqnarray*} 
So, $1 \leq 2^{2k}3k\epsilon - k \epsilon^2 (2 ^{2k}-1)$, thus
$1 < 3k  2^{2k}   \epsilon $, in contradiction to the assumption on $\epsilon$.
Therefore, (at least) one of the classes $E_1,\ldots,E_k$ is represented by the 
inverse image under the moment map of a single edge of $\Delta$ with multiplicity one. 
 By Claim \ref{JI}, 
such a representative $C_T$ is an embedded $J_T$-holomorphic sphere.
It is $\T^2$-invariant: let $a \in \T^2$;  because $\T^2$ is connected, $[a C_T] = [C_T]$;
by positivity of intersections and since $E_i \cdot E_i = -1$,
$aC_T$ and $C_T$ must coincide.
Because $C_T$ is an embedded $J_T$-sphere and $J_T$ is compatible with $\omega_\epsilon$,
$C_T$ is symplectic.

Without loss of generality, the class $E_1$ is represented by such a $J_T$-holomorphic sphere $C_T$.
Set $J_0$ to be an almost complex structure on $(M_k,\omega_\epsilon)$ 
for which 
the exceptional divisors obtained by the symplectic blow ups
are disjoint embedded $J_0$-holomorphic spheres $S_1,\ldots,S_k$ 
that represent the classes $E_1,\ldots,E_k$. 
(Such a structure exists by Lemma \ref{adjunction}.) 
By Lemma \ref{gap} in the Appendix, the symplectic manifold resulting from 
$(M_k,\omega_\epsilon)$ by 
blowing down along $C_T$ is symplectomorphic to the symplectic manifold obtained by
 blowing down along $S_1$, which is $(M_{k-1},\omega_{\epsilon})$.
\end{proof}

\begin{proof}[Proof of Theorem \ref{main1}] 
Assume that $(M_k,\omega_\epsilon)$ is symplectomorphic to $(M_\Delta,\omega_\Delta)$ 
and 
$\epsilon \leq \frac{1}{3k 2^{2k}}$. 
After $k$ iterations of Claim \ref{fin}, %%%%explain why still valid
we get $\tCP$ with a toric action.  
By Lemma \ref{toric is standard}, %%%%and area
this manifold is equivariantly symplectomorphic to $\CP^2$ 
with its standard toric action.
By reversing our steps we get $\tCP$ 
blown up equivariantly $k$ times by equal sizes $\epsilon$.
\end{proof}

\begin{Remark} \label{example}
Theorem \ref{main1} becomes false if we do not restrict $\epsilon$. 
For $\epsilon > \half$,  
let $(M_1,\omega_{\epsilon},\Phi_1)$ be $\tCP$ blown up equivariantly by size $\epsilon$. 
The moment map image is obtained by chopping off a corner of size $\epsilon$ from a Delzant triangle of edge-size $1$, to get  
a trapezoid $\Hirz_{(1+\epsilon) /2,1-\epsilon,1}$, i.e.,  
of height $(1-\epsilon)$, average width $(1+\epsilon) /2$, and slope $-1$.
Let $(N,\omega_2,\Phi_2)$ be a Hirzebruch surface whose image is a trapezoid 
$\Hirz_{(1+\epsilon) /2,1-\epsilon,3}$
(Notice that the north edge is then of size  $2 \epsilon -1$, which is $>0$ if and only if $\epsilon > \half$.)
See Figure \ref{exam}. 
Since these Hirzebruch trapezoids have the same average width and height and the inverse %%%%L  
of their slopes differ by $2$,
the corresponding manifolds  
are isomorphic as symplectic manifolds with Hamiltonian $S^1$-action (by \cite[Lemma 3]{YK:max_tori}), 
however they are not isomorphic as symplectic toric manifolds (their 
moment map polygons are not equivalent). %%%% need notion of equivalent before $\AGL(2,\Z)$-equivalent).

  \begin{figure}
$$
\setlength{\unitlength}{0.00029in}
\begingroup\makeatletter\ifx\SetFigFont\undefined%
\gdef\SetFigFont#1#2#3#4#5{%
  \reset@font\fontsize{#1}{#2pt}%
  \fontfamily{#3}\fontseries{#4}\fontshape{#5}%
  \selectfont}%
\fi\endgroup%
{\renewcommand{\dashlinestretch}{30}
\begin{picture}(10824,3639)(0,-10)
\path(12,12)(3612,12)(12,12)
\path(12,12)(12,1212)(12,12)
\path(2412,1212)(3612,12)(2412,1212)
\path(6012,12)(9612,12)(6012,12)
\path(9612,12)(10812,12)(9612,12)
\path(6012,12)(6012,1212)(6012,12)
\path(10812,12)(7212,1212)(10812,12)
\path(12,1212)(2412,1212)(12,1212)
\path(6012,1212)(7212,1212)(6012,1212)
\dashline{90.000}(12,1212)(12,3612)(12,1212)
\dashline{90.000}(12,3612)(2412,1212)(12,3612)
\end{picture}
}
%\setlength{\unitlength}{0.00029in}
%%
%\begingroup\makeatletter\ifx\SetFigFont\undefined%
%\gdef\SetFigFont#1#2#3#4#5{%
%  \reset@font\fontsize{#1}{#2pt}%
%  \fontfamily{#3}\fontseries{#4}\fontshape{#5}%
%  \selectfont}%
%\fi\endgroup%
%{\renewcommand{\dashlinestretch}{30}
%\begin{picture}(10824,3852)(0,-10)
%%\put(9387,825){\makebox(0,0)[lb]%{\smash{{{\SetFigFont{12}{14.4}{\rmdefault}{\mddefault}{\updefault}F}}}}}
%%\put(5862,825){\makebox(0,0)[lb]%{\smash{{{\SetFigFont{12}{14.4}{\rmdefault}{\mddefault}{\updefault}F}}}}}
%%\put(6612,0){\makebox(0,0)[lb]%{\smash{{{\SetFigFont{12}{14.4}{\rmdefault}{\mddefault}{\updefault}S}}}}}
%%\put(6612,1500){\makebox(0,0)[lb]%{\smash{{{\SetFigFont{12}{14.4}{\rmdefault}{\mddefault}{\updefault}N}}}}}
%%\put(1212,0){\makebox(0,0)[lb]%{\smash{{{\SetFigFont{12}{14.4}{\rmdefault}{\mddefault}{\updefault}L}}}}}
%%\put(1212,1500){\makebox(0,0)[lb]%{\smash{{{\SetFigFont{12}{14.4}{\rmdefault}{\mddefault}{\updefault}E}}}}}
%\dottedline{90}(12,3825)(2412,1425)(12,3825)
%\dottedline{90}(12,1425)(12,3825)(12,1425)
%\path(6012,1425)(7212,1425)(6012,1425)
%\path(12,1425)(2412,1425)(12,1425)
%\path(10812,225)(7212,1425)(10812,225)
%\path(6012,225)(6012,1425)(6012,225)
%\path(9612,225)(10812,225)(9612,225)
%\path(6012,225)(9612,225)(6012,225)
%\path(2412,1425)(3612,225)(2412,1425)
%\path(12,225)(12,1425)(12,225)
%\path(12,225)(3612,225)(12,225)
%\end{picture}
%}
$$
\caption{symplectomorphic but not equivariantly symplectomrophic symplectic toric manifolds}
\label{exam}
\end{figure}
\end{Remark}

Theorem \ref{main1} and Lemma \ref{T equivariant cor} yield the following
corollary.

\begin{Corollary} \label{three1}
$(M_k, \omega_{\epsilon})$ with $\epsilon \leq \frac{1}{3k 2^{2k}}$ 
admits a toric action if and only if $k \leq 3$. 
\end{Corollary}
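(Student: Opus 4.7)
The corollary is simply the two-way combination of Theorem \ref{main1} and Lemma \ref{T equivariant cor}, so the plan is to chain those two results in each direction of the ``if and only if'' separately.

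For the existence direction ($k\leq 3$), I would first observe that the hypothesis $\epsilon\leq\frac{1}{3k\cdot 2^{2k}}$ already forces $\epsilon\leq \frac{1}{12}<\frac{1}{2}$ (this is the worst case, $k=1$; it only gets smaller as $k$ grows). Lemma \ref{T equivariant cor} then supplies, for each $k\in\{1,2,3\}$, an equivariant sequence of $k$ symplectic blow-ups of $(\tCP,\omega_{\FS})$ of equal size $\epsilon$, performed at the three fixed points of the standard toric action on $\tCP$. The resulting manifold is by construction a symplectic toric manifold; as an underlying symplectic manifold (forgetting the action), uniqueness of symplectic blow-ups along a ball guarantees that it coincides with $(M_k,\omega_{\epsilon})$, so $(M_k,\omega_{\epsilon})$ carries a toric action.

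For the nonexistence direction ($k\geq 4$), I would argue by contradiction. Suppose $(M_k,\omega_{\epsilon})$ admitted an effective Hamiltonian $\T^2$-action. By Delzant's theorem this action would be equivariantly symplectomorphic to $(M_{\Delta},\omega_{\Delta},\Phi_{\Delta})$ for some Delzant polygon $\Delta$; in particular $(M_k,\omega_{\epsilon})$ would be symplectomorphic to $(M_{\Delta},\omega_{\Delta})$. Since $\epsilon\leq\frac{1}{3k\cdot 2^{2k}}$, Theorem \ref{main1} would then realize $(M_{\Delta},\omega_{\Delta},\Phi_{\Delta})$ as the result of $k\geq 4$ equivariant symplectic blow-ups of $(\tCP,\omega_{\FS})$ of the same size $\epsilon$. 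But Lemma \ref{T equivariant cor} says precisely that $(\tCP,\omega_{\FS})$ admits no four (or more) toric blow-ups of equal sizes, producing the desired contradiction.

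There is essentially no obstacle: all of the substantive work has already been done in Theorem \ref{main1} and Lemma \ref{T equivariant cor}. The only bookkeeping point worth stating explicitly is the identification ``admits a toric action'' $\Longleftrightarrow$ ``is symplectomorphic to $(M_{\Delta},\omega_{\Delta})$ for some Delzant polygon $\Delta$,'' which is immediate from Delzant's classification and lets Theorem \ref{main1} apply verbatim to the hypothesis of the corollary.
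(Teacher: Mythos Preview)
Your proposal is correct and follows exactly the paper's approach: the paper simply states that the corollary is yielded by Theorem~\ref{main1} and Lemma~\ref{T equivariant cor}, and you have correctly unpacked how those two results combine to give each direction of the equivalence. The only additional observation the paper makes (immediately after the corollary) is that the bound $\epsilon\leq\frac{1}{3k\,2^{2k}}$ guarantees the blow-up actually exists, which your check $\epsilon<\tfrac12$ also covers for $k\leq 3$.
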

We notice that by the sharpness of the constrains listed in \S \ref{bu}, 
when $\epsilon \leq \frac{1}{3k 2^{2k}}$ there exists a 
symplectic blow up of $\tCP$ $k$ times by size $\epsilon$.

Since $H^{1}(M_k, \R)=\{0\}$, 
any effective $(S^1)^2$-action on $(M_k,\omega_\epsilon)$ is toric. 
\begin{Corollary} 
%The symplectic manifold 
$(M_k, \omega_{\epsilon})$ with $\epsilon \leq \frac{1}{3k 2^{2k}}$ 
admits an effective $(S^1)^2$-action if and only if  $k \leq 3$. 
\end{Corollary}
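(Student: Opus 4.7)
My plan is to reduce this statement to Corollary \ref{three1} by arguing that, for this manifold, the effectiveness hypothesis automatically upgrades to a toric (hence Hamiltonian) action. The $(\Leftarrow)$ direction is immediate: if $k \le 3$, Lemma \ref{T equivariant cor} furnishes a toric action on $(M_k,\omega_\epsilon)$ consistent with the blow ups, and a toric action is in particular an effective $(S^1)^2$-action. So only the $(\Rightarrow)$ direction requires work.

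For $(\Rightarrow)$, suppose $(M_k,\omega_\epsilon)$ admits an effective action of $(S^1)^2$ by symplectomorphisms. The first step is the cohomological input already recorded in the paper: $H^{1}(M_k,\R) = 0$. This holds because $\CP^2$ has vanishing first cohomology and symplectic blow up of a four-manifold at a point does not change $H^1$ (the blow up is diffeomorphic to a connect sum with $\overline{\CP}^2$, which has trivial first cohomology, so Mayer--Vietoris leaves $H^1$ unchanged).

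The second step is to invoke the standard principle that a symplectic torus action on a compact symplectic manifold with $H^{1}(M;\R) = 0$ is automatically Hamiltonian. Concretely, for each generating vector field $\xi_i$ of the torus action, the one-form $\iota(\xi_i)\omega$ is closed (by Cartan's formula, since the action preserves $\omega$), and since $H^{1}(M_k,\R) = 0$ it is also exact, producing a component $\Phi_i$ of a moment map; the resulting map $\Phi \colon M_k \to \R^2$ is a genuine moment map for the $(S^1)^2$-action. Because $\dim (S^1)^2 = \tfrac{1}{2}\dim M_k$, this Hamiltonian action is by definition toric.

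Having promoted the given $(S^1)^2$-action to a toric action, Corollary \ref{three1} applies directly, forcing $k \le 3$. Combining both directions yields the stated equivalence. The only nontrivial step is the cohomological upgrade from symplectic to Hamiltonian, and this is a routine application of the $H^{1}=0$ hypothesis together with the fact that moment map components are primitives of the forms $\iota(\xi_i)\omega$; no further obstacles arise.
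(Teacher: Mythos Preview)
Your proposal is correct and follows exactly the paper's approach: the paper records the one-line observation that $H^{1}(M_k,\R)=0$ forces any effective symplectic $(S^1)^2$-action to be Hamiltonian (hence toric in this dimension), and then invokes Corollary~\ref{three1}. Your write-up simply fleshes out that sentence, supplying the standard justification via exactness of $\iota(\xi_i)\omega$ and handling the easy $(\Leftarrow)$ direction explicitly.
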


\appendix
%%%%%%%%%%%%%%%%%%%%%%%%%%%%%%%%%%%%%%%%%%%%%%%%%%%%%%%%%%%%%%%%%%%%%%%%%%%%%%%%%%%%%%%%%%
\section{Uniqueness of blow down} \label{appendix}
%%%%%%%%%%%%%%%%%%%%%%%%%%%%%%%%%%%%%%%%%%%%%%%%%%%%%%%%%%%%%%%%%%%%%%%%%%%%%%%%%%%%%%%%%%

\begin{Lemma} \label{gap}
Let $(M,\omega)$ be a compact four-dimensional symplectic manifold. 
Let $J_0,J_1 \in \J$.
Let $A$ be a class in $H_2(M;\Z)$ such that $c_1(TM)(A)=1$ and $\omega(A)>0$. 
Assume that $A$ is represented by an embedded $J_0$-holomorphic sphere $C_0$ and by an embedded $J_1$-holomorphic 
sphere $C_1$.

Then for $i=0,1$, there are neighbourhoods $U_i$ of $C_i$, each symplectomorphic to a tubular neighbourhood 
of $\CP^1$, 
and a symplectomorphism $\phi$ of $(M,\omega)$, that sends $(U_0,C_0)$ to $(U_1,C_1)$, 
and induces the identity map on $H_2(M;\Z)$.  
\end{Lemma}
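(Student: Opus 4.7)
The plan is to build an ambient symplectic isotopy of $(M,\omega)$ that carries $C_0$ to $C_1$ through a smooth family of embedded $J_t$-holomorphic spheres, then take $\phi$ to be its time-$1$ map. Set $E = A$. Since $A$ is represented by an embedded $J_i$-holomorphic sphere for $i=0,1$, both $J_0$ and $J_1$ lie in $U_E = \image p_E$. Hence, by part (3) of Lemma \ref{ue}, there exists a path $\{J_t\}_{t \in [0,1]}$ in $U_E$ from $J_0$ to $J_1$ such that $\W(E,\{J_t\})$ is a compact one-dimensional manifold and each representative is an embedding.

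First I would identify the arc component of this 1-manifold connecting the class of $C_0$ (the unique point of $\M(E,J_0)/\PSL(2,\C)$) to the class of $C_1$ (the unique point of $\M(E,J_1)/\PSL(2,\C)$) — such an arc must exist because $\W(E,\{J_t\})$ is a compact 1-manifold whose projection to $[0,1]$ is proper and has exactly one preimage over each endpoint. Choosing smooth $\PSL(2,\C)$-lifts of this arc yields a smooth family of $J_t$-holomorphic embeddings $u_t \colon \oCP \to M$ in class $A$, with images $C_t$ a smooth one-parameter family of embedded symplectic spheres interpolating between $C_0$ and $C_1$.

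Next I would invoke the symplectic isotopy extension theorem: any smooth family of closed symplectically embedded submanifolds $\{C_t\}_{t \in [0,1]}$ of a symplectic manifold extends to a smooth family of ambient symplectomorphisms $\phi_t$ of $(M,\omega)$ with $\phi_0 = \id$ and $\phi_t(C_0) = C_t$. This is a standard consequence of the symplectic neighbourhood theorem applied locally in $t$ (the symplectic normal bundles carry canonical complex structures and therefore vary smoothly up to isomorphism) combined with a Moser-type argument performed with compact support in a tubular neighbourhood of $\bigcup_t \{t\} \times C_t$. Setting $\phi = \phi_1$, we obtain a symplectomorphism of $(M,\omega)$ carrying $C_0$ to $C_1$; because $\{\phi_t\}$ is smoothly isotopic to the identity, $\phi$ acts trivially on $H_2(M;\Z)$.

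The neighbourhoods $U_0, U_1$ are then supplied by the symplectic neighbourhood theorem applied directly to $C_0, C_1$: by the adjunction equality $0 = 2 + A \cdot A - c_1(TM)(A)$ we have $A \cdot A = -1$, so the symplectic normal bundle of each $C_i$ is the unique holomorphic line bundle of degree $-1$ over $\oCP$. Hence $C_i$ has a tubular neighbourhood symplectomorphic to a standard neighbourhood of $\oCP$ in the total space of $\calO(-1)$ scaled to have symplectic area $\omega(A)$. Taking $U_0$ a suitably small such neighbourhood and $U_1 = \phi(U_0)$ completes the argument. The main technical hurdle is the symplectic isotopy extension step, but once the family $C_t$ is produced from Lemma \ref{ue}, this is routine.
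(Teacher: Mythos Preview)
Your proposal is correct and follows essentially the same approach as the paper: both invoke part (3) of Lemma~\ref{ue} to produce a smooth one-parameter family of embedded $J_t$-holomorphic spheres in the class $A$ joining $C_0$ to $C_1$, and then extend this isotopy of symplectic submanifolds to an ambient Hamiltonian isotopy of $(M,\omega)$. The only difference is packaging: you invoke the symplectic isotopy extension theorem as a standard black box and build $U_0,U_1$ separately at the end via adjunction and the symplectic neighbourhood theorem, whereas the paper carries out the extension explicitly (parameterized Weinstein neighbourhoods, differentiating to get vector fields $X_t$ with $\iota_{X_t}\omega$ closed hence exact since $\CP^1$ is simply connected, then extending the Hamiltonian to all of $M$ by a cutoff), so that the neighbourhoods $U_t$ appear as part of the construction.
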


\begin{proof}
By part (3) of Lemma \ref{ue}, there is a smooth family (with parameter $0 \leq t \leq 1$) of 
$J_t$-holomorphic embeddings $\rho_t$ from $\oCP$ to the manifold.
Their images are 
all in the homology class $A$.
Notice that the pullbacks of $\omega$ to $\oCP$ by the homotopic maps are 
all in the same cohomology class. 
Hence, by Moser, there is a family of diffeomorphisms $\phi_t \colon \oCP \rightarrow \oCP$,    
starting at the identity map, that satisfy $\phi_t^{*}{({\rho}_0^*(\omega))}=\rho_t^*(\omega)$.
Hence we may assume that $\rho_0$ is a symplectic embedding of the standard $\oCP$ and 
compose the embeddings $\{\rho_t\}$ on the family $\{\phi_t\}$ to
get a one-parameter family 
of symplectic embeddings %$\{\rho_t \circ \phi_t\}$ 
of 
the standard $\oCP$
into $M$.  
Moreover, using a parameterized version of Weinstein's tubular neighbourhood theorem, this family can be extended to
a one-parameter family 
of symplectic embeddings 
$\sigma_t$ 
of a neighbourhood %$U$ 
of  
$\oCP$ (as the zero-section) in 
the tautological bundle  with a symplectic form, 
into $M$; denote the image of $\sigma_t$ by $U_t$.

We get a ``partial flow'' %${\{\eta_t\}}_{0 \leq t \leq 1}$ 
that moves along the neighbourhoods $U_t$.
Differentiating it by $t$ gives vector fields $X_t$, defined at $U_t$.
The Lie derivative $\Lie_{X_t}{\omega}$ is $0$. %of $\omega$ is $0$ in direction of the vector field  $X_t$
%%evaluated on tangent vectors to 
%on $U_t$. 
By Cartan's formula, 
$$ \Lie_{X_t}{\omega} =  d  (\iota_{{X}_t} \omega) + (\iota_{{X}_t}) d \omega = 
d \iota_{{X}_t} \omega,$$
where the last equality is since $\omega$ is closed.
Thus the one form $\iota_{X_t} \omega $ on $U_t$
is closed. 
Therefore, and since $\CP^1$ is simply connected, when we consider $X_t$ as a vector field defined at a neighbourhood of $\oCP \times [0,1] \subseteq M \times [0,1]$, 
%(using the product structure with $dt=0$). 
%(and, by the above, the pullback of $\iota_{X_t} \omega $ to the neighbourhood $U$ of $\oCP$ is closed), 
we get a function $h$ defined on a (maybe smaller) neighbourhood of $\oCP \times [0,1] \subseteq M \times [0,1]$,
such that 
%the pullback of  $\iota_{{X}_t} \omega$ to a neighbourhood of $(\CP^1,t)=C_t$ equals $d h_t$. %%%%fix the t, 
$\iota_{{X}_t} \omega = dh _t$.  
Using partition of unity in $M \times [0,1]$, we expand $h$ to a smooth function ${H} \colon M \times [0,1] \to \R$,  
whose restriction to a small neighborhood of  $\image \rho_t$
coincides with $h_t$.

This gives a Hamiltonian flow on $M$, thus a family of symplectomorphisms $\{\alpha_t\}_{0 \leq t \leq 1}$, 
starting from the identity map. Take $\alpha_1$ to be $\phi$. 
\end{proof}

\end{document}